\tikzset{snake it/.style={decorate, decoration=snake}}
\newcommand\blank[1][.6em]{%
  \mbox{\kern.06em\vrule height.5ex}%
  \vbox{\hrule width#1}%
  \hbox{\vrule height.5ex}}
\begin{document}
\begin{frontmatter}
\title{On a combination of the 1-2-3 Conjecture and \\ the Antimagic Labelling Conjecture}

\author[nice]{Julien Bensmail}
\author[labri]{Mohammed Senhaji}
\author[dtu]{Kasper Szabo Lyngsie}

\journal{...}

\address[nice]{INRIA and Université Nice-Sophia-Antipolis \\ I3S, UMR 7271 \\ 06900 Sophia-Antipolis, France \\~}
\address[labri]{Université de Bordeaux \\ LaBRI, UMR5800 \\ F-33400 Talence, France \\~}
\address[dtu]{Department of Applied Mathematics and Computer Science \\ Technical University of Denmark \\ DK-2800 Lyngby, Denmark \\~}

\begin{abstract}
This paper is dedicated to studying the following question:
Is it always possible to injectively assign the weights $1,...,|E(G)|$ to the edges of any given graph $G$ (with no component isomorphic to $K_2$)
so that every two adjacent vertices of $G$ get distinguished by their sums of incident weights?
One may see this question as a combination of the well-known 1-2-3 Conjecture and the Antimagic Labelling Conjecture.

Throughout this paper, we exhibit evidence that this question might be true.
Benefiting from the investigations on the Antimagic Labelling Conjecture,
we first point out that several classes of graphs, such as regular graphs, indeed admit such assignments.
We then show that trees also do, answering a recent conjecture of Arumugam, Premalatha, Ba\v{c}a and Semani\v{c}ov\'a-Fe\v{n}ov\v{c}\'ikov\'a.
Towards a general answer to the question above, 
we then prove that claimed assignments can be constructed for any graph,
provided we are allowed to use some number of additional edge weights.
For some classes of sparse graphs, namely $2$-degenerate graphs and graphs with maximum average degree~$3$,
we show that only a small (constant) number of such additional weights suffices.
\end{abstract}

\begin{keyword} 
1-2-3 Conjecture, Antimagic Labelling Conjecture, equitable edge-weightings, neighbour-sum-distinguishing edge-weightings.
\end{keyword}
 
\end{frontmatter}

\newcommand\mad{{\rm mad}}
\def\mymathhyphen{{\hbox{-}}}

\newtheorem{theorem}{Theorem}[section]
\newtheorem{lemma}[theorem]{Lemma}
\newtheorem{conjecture}[theorem]{Conjecture}
\newtheorem{observation}[theorem]{Observation}
\newtheorem*{123conjecture}{1-2-3 Conjecture}
\newtheorem*{antimagic}{Antimagic Labelling Conjecture}
\newtheorem*{12conjecture}{1-2 Conjecture}
\newtheorem{claim}[theorem]{Claim}
\newtheorem{corollary}[theorem]{Corollary}
\newtheorem{proposition}[theorem]{Proposition}
\newtheorem{question}[theorem]{Question}


\section{Introduction} \label{section:introduction}

In order to present our investigations in this paper, 
as well as our motivations,
we first need to introduce a few particular graph concepts and notions.
We refer the reader to textbooks on graph theory for more details on any standard notion or terminology not introduced herein.

Given a (undirected, simple, loopless) graph $G$ and a set $W$ of weights,
by a \textit{$W$-edge-weighting} of $G$ we mean an edge-weighting with weights from $W$.
For any $k \geq 1$, a \textit{$k$-edge-weighting} is a $\{1,...,k\}$-edge-weighting.
Given an edge-weighting $w$ of $G$,
one can compute, for every vertex $v$ of $G$,
the sum $\sigma(v)$ (or $\sigma_w(v)$ when more precision is needed) of weights assigned by $w$ to the edges incident to $v$.
That is, $$\sigma_w(v) := \sum_{u \in N(v)} w(vu)$$
for every vertex $v$ of $G$.
In case we have $\sigma_w(u) \neq \sigma_w(v)$ for every edge $uv$ of $G$,
we call $w$ \textit{neighbour-sum-distinguishing}.
It can be observed that every graph with no connected component isomorphic to~$K_2$ admits neighbour-sum-distinguishing edge-weightings using sufficiently large weights.
In the context of the current investigations, when speaking of a \textit{nice graph} we mean a graph with no connected component isomorphic to~$K_2$.
For a nice graph $G$,
it hence makes sense to study the smallest $k$ such that $G$ admits a neighbour-sum-distinguishing $k$-edge-weighting.
We denote this chromatic parameter by $\chi^e_\Sigma(G)$.

Throughout this paper,
we deal with edge-weightings that are not only neighbour-sum-distinguishing
but also do not assign any edge weight more than once.
We say that such edge-weightings are \textit{edge-injective}.
Still under the assumption that $G$ is a nice graph,
we denote by $\chi^{e,1}_\Sigma(G)$ the smallest $k$ such that $G$ admits an edge-injective neighbour-sum-distinguishing $k$-edge-weighting.

\medskip

In this paper, we consider the following conjecture.
Our motivations for studying this conjecture,
as well as our evidences to suspect that it might be true,
are described below.

\begin{conjecture} \label{conjecture:main}
For every nice graph $G$, we have $\chi^{e,1}_\Sigma(G) = |E(G)|$. 
\end{conjecture}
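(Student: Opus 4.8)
Since an edge-injective $\{1,\dots,k\}$-edge-weighting necessarily uses $|E(G)|$ pairwise distinct values, we always have $\chi^{e,1}_\Sigma(G)\ge |E(G)|$, so the conjecture reduces to the upper bound: every nice graph $G$ admits a \emph{bijective} neighbour-sum-distinguishing edge-weighting $\ell\colon E(G)\to\{1,\dots,|E(G)|\}$. The plan is to prove this by induction on $|E(G)|$. Two vertices in distinct components are never adjacent, so one is tempted to reduce to connected graphs; but a genuine reduction forces a stronger statement, namely that for every nice graph and \emph{every} set $S$ of $|E(G)|$ positive integers there is a neighbour-sum-distinguishing bijection $E(G)\to S$, since the weight-blocks handed to distinct components must be disjoint and one cannot simply reuse $\{1,\dots,m_i\}$ on each. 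For connected graphs of order at least $3$ the Antimagic Labelling Conjecture — known to hold for regular graphs and several further classes — would already settle the case $S=\{1,\dots,m\}$, which is the main reason to believe the statement; the value of a direct attack is to also reach disconnected graphs and the ``prescribed $S$'' strengthening, for which the existing arguments for antimagic labellings of trees, regular graphs, etc.\ serve as templates.

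For the inductive step on a connected nice $G$ with $m$ edges, I would pick a vertex $v$ of small degree $d$ such that $G-v$ is again nice (no $K_2$ component is created); the finitely many small graphs admitting no such $v$ — triangles, short paths, and a few similar graphs — are dealt with by hand (e.g.\ $K_3$ works for every $3$-element weight set since $x+y<x+z<y+z$). Apply the strengthened induction hypothesis to $G-v$ with the set $S$ minus its $d$ largest elements, obtaining a labelling $\ell'$, then extend it by assigning the $d$ largest weights $w_1>\dots>w_d$ of $S$ to the edges at $v$ in an order given by a permutation $\pi$. The key point is that $\sigma(v)=\sum_i\pi(i)$ is a single large constant, independent of $\pi$, whereas $\sigma(u_i)=\sigma_{\ell'}(u_i)+\pi(i)$ for each neighbour $u_i$ of $v$.

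It then remains to choose $\pi$ creating no conflict. A conflict $\sigma(v)=\sigma(u_i)$ forbids exactly one value of $\pi(i)$; a conflict $\sigma(u_i)=\sigma(x)$ for a neighbour $x\notin N(v)$ of $u_i$ occurs only when $\sigma_{\ell'}(x)=\sigma_{\ell'}(u_i)+\pi(i)$ falls in the top window $[\sigma_{\ell'}(u_i)+w_d,\ \sigma_{\ell'}(u_i)+w_1]$, again forbidding one value of $\pi(i)$ per such $x$; and the few pairs $u_i,u_j$ both adjacent to $v$ (or to each other) impose difference constraints on $\pi(i)-\pi(j)$ handled by a short separate analysis. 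Setting the pair constraints aside, producing $\pi$ is a system-of-distinct-representatives problem: find a perfect matching between the $d$ positions and the $d$ largest weights avoiding, at position $i$, a forbidden set of size at most roughly $\deg_{G-v}(u_i)$. When all neighbours of $v$ are light this Hall condition holds with room to spare. (The case $d=1$ is special: a leaf $v$ leaves no freedom in $\pi$ at all, which is part of why trees deserve a dedicated argument rather than this induction.)

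The main obstacle is precisely this matching: a neighbour $u_i$ of $v$ of large degree can have almost all of $w_1,\dots,w_d$ forbidden at position $i$ (one per neighbour of $u_i$ whose current sum lands in that narrow window), so Hall's condition can fail and the extension gets stuck. For sparse inputs this cannot go too badly wrong: for $2$-degenerate graphs and for graphs of maximum average degree at most $3$ one can always choose $v$ with a light neighbourhood, and allowing a bounded number $t$ of \emph{extra} weights — working inside $\{1,\dots,m+t\}$ — enlarges the pool of reserved weights and restores enough slack, which is the route behind the partial results stated in the abstract. Eliminating the extra weights entirely, and handling dense or near-regular graphs where no removable light vertex exists, is where the argument breaks down; I expect it to need a different tool — either the full Antimagic Labelling Conjecture on each component of order at least $3$ together with a robust way to shift weight-blocks between components, or an algebraic/probabilistic guarantee for the matching (the Combinatorial Nullstellensatz applied to the variables $\pi(i)$, or a random choice of the reserved permutation). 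This is the step that keeps the statement a conjecture rather than a theorem.
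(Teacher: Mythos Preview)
The statement you are attempting to prove is a \emph{conjecture} in the paper, not a theorem: the paper does not prove it, and indeed explicitly frames it as open. There is therefore no ``paper's own proof'' to compare your attempt against. What the paper does establish are partial results --- the conjecture for trees and (via antimagicity) regular graphs, the general bounds $\chi^{e,1}_\Sigma(G)\le 2|E(G)|$ and $\chi^{e,1}_\Sigma(G)\le |E(G)|+2\Delta(G)$, and $|E(G)|+O(1)$ bounds for $2$-degenerate graphs and graphs with $\mad\le 3$ --- and these are proved by methods quite close to the ones you sketch.

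You yourself identify the gap accurately in your final paragraph: the induction-by-deleting-a-light-vertex scheme, with the largest remaining weights reserved for the reinserted edges and a Hall/SDR argument to permute them, breaks down when the removed vertex has a neighbour of large degree, because that neighbour can forbid essentially all of the reserved weights. This is not a minor technicality; it is exactly the obstruction that forces the paper to settle for $|E(G)|+2\Delta(G)$ in general (Theorem~\ref{theorem:m+2D}, whose proof is precisely your scheme carried out along a BFS spanning tree, with the $2\Delta$ extra weights providing the slack your Hall condition lacks) and for $|E(G)|+4$ or $|E(G)|+6$ in the sparse classes (Theorems~\ref{theorem:2deg} and~\ref{theorem:mad}, where degeneracy or $\mad$ bounds guarantee a removable configuration with only light neighbours). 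Your proposal is thus a correct diagnosis of why the conjecture is hard and a reasonable outline of the paper's partial-result machinery, but it is not a proof of the conjecture, and you should not present it as one.
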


\noindent By the edge-injectivity property, 
we note that $|E(G)|$ is a lower bound on $\chi^{e,1}_\Sigma(G)$ for every nice graph $G$.
Conjecture~\ref{conjecture:main}, in brief words,
hence asks whether, for every nice graph $G$,
we can bijectively assign weights~$1,...,|E(G)|$ to the edges of $G$
so that no two adjacent vertices of $G$ get the same value of $\sigma$.

\medskip

Conjecture~\ref{conjecture:main} is related to the well-known \textbf{1-2-3 Conjecture},
raised in 2004 by Karo\'nski, {\L}uczak and Thomason~\cite{KLT04},
which states the following.

\begin{123conjecture}
For every nice graph $G$, we have $\chi^e_\Sigma(G) \leq 3$.
\end{123conjecture}

Many aspects of the 1-2-3 Conjecture have been studied in literature.
For an overview of those considered aspects, we refer the interested reader to the wide survey~\cite{Sea12} by Seamone, which is dedicated to this topic.
Our investigations in this paper 
are mostly related to a recent equitable variant of the 1-2-3 Conjecture that was considered by Baudon, Pil\'sniak, Przyby{\l}o, Senhaji, Sopena and Wo\'zniak in~\cite{BPPSSW16}.
In this variant, the authors studied, for some families of nice graphs,
the existence of neighbour-sum-distinguishing edge-weightings being \textit{equitable},
i.e. in which any two distinct edge weights are assigned about the same number of times (being equal, or differing by~$1$).
In particular, they introduced and studied, for any given graph $G$, the chromatic parameter denoted by $\overline{\chi^e_\Sigma}(G)$
being the smallest maximal weight in an equitable neighbour-sum-distinguishing edge-weighting of $G$. 
In brief words, they proved that, at least for particular common classes of nice graphs (such as complete graphs and some bipartite graphs),
the two parameters $\chi^e_\Sigma$ and $\overline{\chi^e_\Sigma}$ are equal except for a few exceptions.

Despite their results,
the authors of~\cite{BPPSSW16} did not dare addressing a general conjecture on how should $\overline{\chi^e_\Sigma}$ behave in general,
or compared to $\chi^e_\Sigma$ for a given nice graph.
In particular, it does not seem obvious how big $\overline{\chi^e_\Sigma}$ can be, 
neither whether this parameter can be arbitrarily large.
This is one of our motivations for studying edge-injective neighbour-sum-distinguishing edge-weightings,
as an edge-injective edge-weighting is always equitable.
Thus, $\overline{\chi^e_\Sigma}(G) \leq \chi^{e,1}_\Sigma(G)$ holds for every nice graph $G$.
Hence, attacking Conjecture~\ref{conjecture:main} can be regarded as a way to get progress towards all those questions.

\medskip

Our second motivation for considering Conjecture~\ref{conjecture:main}
is that edge-injective neighbour-sum-distinguishing edge-weightings can be regarded as a weaker notion of well-known \textbf{antimagic labellings}.
Formally, using our own terminology, an \textit{antimagic labelling} $w$ of a graph $G$
is an edge-injective $|E(G)|$-edge-weighting of $G$ for which $\sigma_w$ is injective,
i.e. all vertices of $G$ get a distinct sum of incident weights by $w$.
We say that $G$ is \textit{antimagic} if it admits an antimagic labelling.
Many lines of research concerning antimagic labellings can be found in literature,
most of which are related to the following conjecture
addressed by Hartsfield and Ringel in~\cite{HR90}.

\begin{antimagic}
Every nice connected graph is antimagic.
\end{antimagic}

Despite lots of efforts (refer to the dynamic survey~\cite{Gal97} by Gallian for an in-depth summary of the vast and rich literature on this topic),
the Antimagic Labelling Conjecture is still open in general,
even for common classes of graphs such as nice trees.
Conjecture~\ref{conjecture:main}, which is clearly much weaker than the Antimagic Labelling Conjecture,
as the distinction condition here only concerns the adjacent vertices,
hence sounds as a much easier challenge to us,
in particular concerning classes of nice graphs that are not known to be antimagic.

Hence, every antimagic graph $G$ agrees with Conjecture~\ref{conjecture:main},
implying, as described earlier, that 
$$\overline{\chi^e_\Sigma}(G) \leq \chi^{e,1}_\Sigma(G) = |E(G)|$$
holds, thus providing an upper bound on $\overline{\chi^e_\Sigma}(G)$ for $G$.
This is of interest as several classes of graphs,
such as nice regular graphs and nice complete partite graphs,
are known to be antimagic, see~\cite{Gal97}.
Let us here further mention the works of B\'erci, Bern\'ath and Vizer~\cite{BBV15}, 
and of Cranston, Liang and Zhu~\cite{CLZ14},
who led to the verification of the Antimagic Labelling Conjecture for nice regular graphs,
and whose some proof techniques partly inspired some used in the current paper.
Conversely, proving that a graph $G$ verifies $\chi^{e,1}_\Sigma(G)=|E(G)|$ and agrees with Conjecture~\ref{conjecture:main}
is similar to proving that,
in some sense, $G$ is ``locally antimagic''.

\bigskip

Conjecture~\ref{conjecture:main} can essentially be considered as a combination of the 1-2-3 Conjecture and the Antimagic Labelling Conjecture,
as the notions behind it have flavours of both conjectures.
As described earlier, proving Conjecture~\ref{conjecture:main} for some classes of graphs has, to some extent,
consequences on the 1-2-3 Conjecture and the Antimagic Labelling Conjecture,
or at least on variants of these conjectures. 

Our work in this paper, is focused on both proving Conjecture~\ref{conjecture:main} for particular classes of nice graphs,
and providing upper bounds on $\chi^{e,1}_\Sigma$ for some classes of nice graphs.
This paper is organized as follows.
Tools and preliminary results we use throughout
are introduced in Section~\ref{section:prelim}.
After that, we start off by providing support to Conjecture~\ref{conjecture:main} in Section~\ref{section:support},
essentially by showing and pointing out that the conjecture holds for some classes of graphs, such as nice trees and regular graphs.
Towards Conjecture~\ref{conjecture:main}, we then provide, in Section~\ref{section:upper-bound}, 
general weaker upper bounds on~$\chi^{e,1}_\Sigma$.
These bounds are then improved for some classes of nice sparse graphs in Section~\ref{section:bound-classes}.
These classes include nice graphs with maximum average degree at most~$3$
and nice $2$-degenerate graphs.
Concluding comments are gathered in Section~\ref{section:ccl}.

\medskip

\noindent \textbf{Remark: } During the review process, we have been notified that a paper introducing the notion of ``locally antimatic graphs'',
written by Arumugam, Premalatha, Ba\v{c}a and Semani\v{c}ov\'a-Fe\v{n}ov\v{c}\'ikov\'a,
appeared online~\cite{APBS17}.
That paper and the current one consider different aspects of this notion.	
Namely, \cite{APBS17} is focused on the smallest number of colour sums by an edge-injective neighbour-sum-distinguishing $|E(G)|$-edge-weighting.
In particular, our Theorem~\ref{theorem:trees} on trees answers positively to Conjecture~2.3 in \cite{APBS17}.


\section{Preliminary remarks and results} \label{section:prelim}

In this section, we introduce several observations that will be of some use in the next sections.
Conjecture~\ref{conjecture:main} is mainly about $k$-edge-weightings;
however, to lighten some proofs,
we will rather focus on edge-weightings assigning strictly positive weights only.
The reader should keep this detail in mind.

\medskip

We start off by pointing out a few situations
in which, for a given edge $uv$ of any graph $G$,
we necessarily get $\sigma(u) \neq \sigma(v)$ by an edge-injective edge-weighting of $G$.
We omit a formal proof
as it is easily seen that these claims are true.
We note that the third item is more general, as it implies the other two.

\begin{observation} \label{observation:situations}
Let $G$ be a graph, and $w$ be an edge-injective edge-weighting of $G$.
Then, for every edge $uv$ of $G$, we have $\sigma(u) \neq \sigma(v)$ in any of the following situations:
\begin{enumerate}
	\item $d(u)=1$ and $d(v) \geq 2$;
	\item $d(u)=d(v)=2$;
	\item $d(u) \geq d(v)$ and $$\min\left\{w(uv') : v' \in N(u) \setminus \{v\}\right\} \geq \max\left\{w(vu') : u' \in N(v) \setminus \{u\}\right\}.$$
\end{enumerate}
\end{observation}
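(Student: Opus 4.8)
The plan is to establish the third item first, since it is the most general, and then to read off items~1 and~2 from it. Fix an edge $uv$ of $G$ and write $A = N(u) \setminus \{v\}$ and $B = N(v) \setminus \{u\}$. The key (and essentially only) observation is that the weight $w(uv)$ contributes equally to $\sigma(u)$ and to $\sigma(v)$, hence cancels:
$$\sigma_w(u) - \sigma_w(v) = \sum_{v' \in A} w(uv') - \sum_{u' \in B} w(vu').$$
Assuming the hypotheses of item~3, set $m = \min_{v' \in A} w(uv')$ and $M = \max_{u' \in B} w(vu')$, so that $d(u) \geq d(v)$ reads $|A| \geq |B|$, and we are given $m \geq M$. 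Since all weights are positive (recall that throughout we restrict to strictly positive edge-weightings), the first sum above is at least $|A| \cdot m \geq |B| \cdot m \geq |B| \cdot M$, which bounds the second sum from above; hence $\sigma_w(u) - \sigma_w(v) \geq 0$.

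It then remains to upgrade this to a strict inequality, which is the only mildly delicate point. If $|A| > |B|$, the first sum is already at least $(|B|+1)m > |B| \cdot m \geq |B| \cdot M$, giving $\sigma_w(u) > \sigma_w(v)$. Otherwise $|A| = |B|$, and here I would assume $B \neq \emptyset$, the case $A = B = \emptyset$ being exactly that of a component isomorphic to $K_2$, which does not arise for a nice graph (and in which the conclusion genuinely fails). If $m > M$ then $\sigma_w(u) - \sigma_w(v) \geq |A|(m - M) > 0$. If $m = M$ then some edge incident to $u$ other than $uv$ and some edge incident to $v$ other than $uv$ both receive the weight $m$; these two edges are distinct because $u \neq v$, contradicting the edge-injectivity of $w$. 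In all cases $\sigma_w(u) \neq \sigma_w(v)$, which proves item~3.

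Finally I would note that items~1 and~2 are particular cases. For item~1, exchanging the roles of $u$ and $v$, we have $d(v) \geq d(u)$ and $N(u) \setminus \{v\} = \emptyset$, so the maximum in the hypothesis is taken over the empty set and the condition holds vacuously. For item~2, both $N(u) \setminus \{v\}$ and $N(v) \setminus \{u\}$ are singletons whose two associated weights are distinct by edge-injectivity; relabelling $u$ and $v$ if necessary so that the edge at $u$ other than $uv$ carries the larger of these two weights places us in the situation of item~3 with $d(u) = d(v)$. In short, the heart of the argument is the one-line cancellation together with the min/max estimate, and the only real care needed is in handling the empty-set cases so as to obtain a strict rather than a weak inequality.
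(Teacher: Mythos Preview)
Your proof is correct and follows exactly the route the paper indicates but does not write out: the paper omits a formal proof of this observation, remarking only that the claims are ``easily seen'' and that item~3 implies the other two. Your argument supplies the natural details---the cancellation of $w(uv)$, the min/max estimate using strict positivity of the weights, and edge-injectivity to rule out the tie $m=M$ when $|A|=|B|$.

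The only cosmetic point worth flagging is that you define $M=\max_{u'\in B}w(vu')$ up front and then later reduce item~1 to item~3 in a situation where the relevant set is empty. Since in that sub-case the second sum is simply $0$ while the first is strictly positive (as the other neighbourhood is non-empty), the chain of inequalities goes through without ever needing $M$, so this is not a genuine gap---but you might state that one-line case separately rather than relying on the convention $\max\emptyset=-\infty$.
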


We now observe that to be able to successfully extend a partial neighbour-sum-distinguishing edge-weighting to an edge,
we need to have sufficiently distinct weights in hand for that purpose.

\begin{observation} \label{observation:extending-to-an-edge}
Let $G$ be a graph, $uv$ be an edge of $G$,
and $w$ be a neighbour-sum-distinguishing edge-weighting of $G-\{uv\}$ such that $\sigma(u) \neq \sigma(v)$.
Then $w$ can be successfully extended to $uv$,
provided we have a set $W$ of at least $d(u)+d(v)-1$ distinct strictly positive weights that can be assigned to $uv$.
\end{observation}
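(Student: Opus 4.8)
The plan is to assign to $uv$ some weight $a \in W$ and to argue that all but a bounded number of choices of $a$ yield a neighbour-sum-distinguishing edge-weighting of the whole of $G$. First I would observe that, upon giving weight $a$ to $uv$, the only two vertices whose incident-weight sums change are $u$ and $v$: their sums become $\sigma_w(u) + a$ and $\sigma_w(v) + a$ respectively (here $\sigma_w$ denotes the sums computed in $G-\{uv\}$), while $\sigma$ is left untouched at every other vertex. Hence, in order to decide which values of $a$ are admissible, it suffices to inspect the edges of $G$ that are incident to $u$ or to $v$.

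Next I would deal with the edge $uv$ itself. After the extension we have $\sigma(u) = \sigma_w(u) + a$ and $\sigma(v) = \sigma_w(v) + a$, so $\sigma(u) \neq \sigma(v)$ is equivalent to $\sigma_w(u) \neq \sigma_w(v)$, which holds by hypothesis; thus $uv$ imposes no constraint on $a$ whatsoever. Then I would consider the remaining edges incident to $u$, that is, the edges $uu'$ with $u' \in N(u) \setminus \{v\}$. For such an edge, $\sigma(u') = \sigma_w(u')$ is unchanged, so $\sigma(u) = \sigma(u')$ occurs for exactly one value of $a$, namely $a = \sigma_w(u') - \sigma_w(u)$. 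Since there are $d(u)-1$ such neighbours $u'$, at most $d(u)-1$ values of $a$ are excluded in this way. Symmetrically, the edges $vv'$ with $v' \in N(v) \setminus \{u\}$ exclude at most $d(v)-1$ further values of $a$.

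Summing these bounds, at most $(d(u)-1) + (d(v)-1) = d(u)+d(v)-2$ elements of $W$ are forbidden. As $|W| \geq d(u)+d(v)-1$, at least one weight $a \in W$ remains available; assigning it to $uv$ produces the desired neighbour-sum-distinguishing edge-weighting of $G$.

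I would expect essentially no obstacle in this argument, as it is a one-line union bound once the right observation is made. The only mild point of care is to note that a vertex which is a common neighbour of $u$ and $v$ creates no additional difficulty: its sum is unaffected by the weight placed on $uv$, so the two families of edges above may safely be treated separately and the crude count $d(u)+d(v)-2$ suffices. (If, moreover, one wishes the extended edge-weighting to remain edge-injective, it is enough to require that $W$ be disjoint from the set of weights already used by $w$.)
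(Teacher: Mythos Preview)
Your proof is correct and follows essentially the same approach as the paper: both argue that the hypothesis $\sigma_w(u)\neq\sigma_w(v)$ makes the edge $uv$ itself impose no constraint, and then count at most $(d(u)-1)+(d(v)-1)$ forbidden weights coming from the other neighbours of $u$ and $v$. Your write-up is in fact slightly more explicit about the algebra, and your closing remarks on common neighbours and edge-injectivity are correct but not needed for the statement as phrased.
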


\begin{proof}
We note that $w$ currently must satisfy $\sigma(u) \neq \sigma(v)$,
as, otherwise, no matter what weight we assign to $uv$,
we would eventually get $\sigma_w(u)=\sigma_w(v)$.
Under that assumption,
we note that weighting $uv$ with any weight completely determines the value of both $\sigma_w(u)$ and $\sigma_w(v)$.
The value of $\sigma_w(u)$ eventually has to be different from the sums of weights incident to the $d(u)-1$ neighbours of $u$ different from $v$.
Similarly, the value of $\sigma_w(v)$ eventually has to be different from the sums of weights incident to the $d(v)-1$ neighbours of $v$ different from $u$.
The neighbours of $u$ and $v$ hence forbid us from assigning at most $d(u)+d(v)-2$ possible distinct weights to $uv$.
Now, since weighting $uv$ with distinct weights results in distinct values of $\sigma_w(u)$ and $\sigma_w(v)$,
it should be clear that we can find a correct weight for $uv$ in $W$,
provided $W$ includes at least $d(u)+d(v)-1$ distinct weights.
\end{proof}

\medskip

Throughout this paper,
several of the proofs consist 
in deleting two adjacent edges $vu_1$ and $vu_2$ from~$G$,
edge-weighting the remaining graph,
and correctly extending the weighting to $vu_1$ and $vu_2$. 
In this regard, we will often refer to the following result,
which is about the number of weights that are sufficient to weight $vu_1$ and $vu_2$.

\begin{observation} \label{observation:cut2edges}
Let $G$ be a graph having two adjacent edges $vu_1$ and $vu_2$
such that $G':=G-\{vu_1,vu_2\}$ admits a neighbour-sum-distinguishing edge-weighting $w_{G'}$.
Assume further that $d_G(u_1) \geq d_G(u_2)$,
and set $$\mu := \left(d_G(u_1)+1\right)+\max\left\{0,d_G(v)+d_G(u_2)-d_G(u_1)-1\right\}.$$
Then, assuming we have a set $W$ of at least $\mu$ distinct strictly positive weights,
we can extend $w_{G'}$ to a neighbour-sum-distinguishing edge-weighting of $G$
by assigning two distinct weights of $W$ to $vu_1$ and $vu_2$.
\end{observation}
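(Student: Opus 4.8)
The plan is to choose directly two suitable distinct weights $a := w(vu_1)$ and $b := w(vu_2)$ from $W$ extending $w_{G'}$. Write $s_v := \sigma_{w_{G'}}(v)$, $s_1 := \sigma_{w_{G'}}(u_1)$ and $s_2 := \sigma_{w_{G'}}(u_2)$. Since $vu_1$ and $vu_2$ are the only edges of $G$ absent from $G'$, after the extension we have $\sigma(u_1) = s_1 + a$, $\sigma(u_2) = s_2 + b$ and $\sigma(v) = s_v + a + b$, while $\sigma$ is unchanged at every other vertex. Hence the only edges of $G$ that could now fail to be distinguished are those incident to $u_1$, $u_2$ or $v$. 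I would first record the two ``mixed'' conditions: $\sigma(v) \neq \sigma(u_1)$ reduces to $b \neq s_1 - s_v$, and $\sigma(v) \neq \sigma(u_2)$ reduces to $a \neq s_2 - s_v$; so each of them forbids just a single value, of $b$ and of $a$ respectively. It is also convenient to note that $\mu = \max\{d_G(u_1)+1,\, d_G(v)+d_G(u_2)\}$ (as one checks from the two cases of the $\max$ defining $\mu$), so $|W| \geq \mu$ gives $|W| \geq d_G(u_1)+1$ and $|W| \geq d_G(v)+d_G(u_2)$.

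Then I would pick $a$ and $b$ greedily, in this order. First choose $a \in W$ avoiding the value $s_2 - s_v$ and, for every neighbour $x$ of $u_1$ with $x \notin \{v, u_2\}$, the value $\sigma_{w_{G'}}(x) - s_1$ (which ensures $\sigma(u_1) \neq \sigma(x)$). This rules out at most $1 + (d_G(u_1)-1) = d_G(u_1) \leq |W| - 1$ values of $W$, so a valid $a$ exists. Now $\sigma(u_1) = s_1 + a$ is determined; choose $b \in W$ avoiding $a$ itself, the value $s_1 - s_v$, for every neighbour $y$ of $u_2$ with $y \neq v$ the value $\sigma(y) - s_2$ (at most $d_G(u_2)-1$ values, noting that when $u_1u_2 \in E(G)$ the value $\sigma(u_1)$ is already fixed), and for every neighbour $z$ of $v$ with $z \notin \{u_1,u_2\}$ the value $\sigma_{w_{G'}}(z) - s_v - a$ (at most $d_G(v)-2$ values, forcing $\sigma(v) \neq \sigma(z)$). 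This rules out at most $2 + (d_G(u_2)-1) + (d_G(v)-2) = d_G(v)+d_G(u_2)-1 \leq |W|-1$ values, so a valid $b \neq a$ exists.

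Finally I would check that the resulting edge-weighting is neighbour-sum-distinguishing on all of $G$: edges avoiding $\{u_1,u_2,v\}$ are untouched; $\sigma(u_1) \neq \sigma(x)$ for $x \in N(u_1) \setminus \{v\}$ holds by the choice of $a$ (the case $x = u_2$, which only arises when $u_1u_2 \in E(G)$, being covered by the choice of $b$); $\sigma(u_2) \neq \sigma(y)$ for $y \in N(u_2) \setminus \{v\}$ holds by the choice of $b$; and $\sigma(v)$ differs from $\sigma(u_1)$ and $\sigma(u_2)$ by the two mixed conditions, and from $\sigma(z)$ for the remaining neighbours $z$ of $v$ by the choice of $b$.

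The step I expect to be the main obstacle is the bookkeeping of forbidden values, in two respects. First, the neighbours of $v$ other than $u_1,u_2$ also impose constraints — on the sum $a+b$ — which is why they are deferred to the choice of $b$, once $a$ has been fixed and each such constraint has become a single forbidden value. Second, when $vu_1u_2$ forms a triangle, the condition $\sigma(u_1) \neq \sigma(u_2)$ must be absorbed into the count for $b$ rather than for $a$, so that the two counts come out to exactly $d_G(u_1)$ and $d_G(v)+d_G(u_2)-1$ — which is precisely what $\mu$ is calibrated to cover. The asymmetric hypothesis $d_G(u_1) \geq d_G(u_2)$ is used exactly here: weighting the edge to the larger-degree endpoint \emph{first} keeps the second count at $d_G(v)+d_G(u_2)-1$ rather than the potentially larger $d_G(v)+d_G(u_1)-1$.
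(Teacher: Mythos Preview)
Your proof is correct and follows essentially the same two-step greedy approach as the paper: weight $vu_1$ first avoiding the $d_G(u_1)$ relevant conflicts, then weight $vu_2$ avoiding the remaining $d_G(v)+d_G(u_2)-1$ conflicts (the paper packages this second step as an appeal to its Observation~\ref{observation:extending-to-an-edge}, whereas you unfold it explicitly). Your rewriting $\mu=\max\{d_G(u_1)+1,\,d_G(v)+d_G(u_2)\}$ and your explicit treatment of the triangle case $u_1u_2\in E(G)$ are tidy improvements in presentation, but the underlying argument is the same.
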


\begin{proof}
We extend $w_{G'}$ to a neighbour-sum-distinguishing edge-weighting $w_G$ of $G$
by first assigning a weight of $W$ to $vu_1$,
and then assigning a distinct weight to $vu_2$.
We determine, in this proof, the smallest number $\mu$ of weights that $W$ should contain
so that this strategy has sufficiently many weights to be successfully applied.

We note that extending $w_{G'}$ to $vu_1$ completely determines the value of $\sigma_{w_G}(u_1)$,
while the value of $\sigma_{w_G}(v)$ is not determined until $vu_2$ is also weighted.
Hence, when first weighting $vu_1$,
we mainly have to make sure that $\sigma_{w_G}(u_1)$ does not get equal to the sum of weights incident to a neighbour of $u_1$ different from $v$.
Also, we should make sure that $\sigma_{w_{G'}}(v)+w_G(vu_1)$ does not get equal to $\sigma_{w_{G'}}(u_2)$,
as otherwise we would necessarily get $\sigma_{w_G}(v)=\sigma_{w_G}(u_2)$
no matter how we weight $vu_2$.
There are hence $d_G(u_1)$ conflicts to take into account when weighting $vu_1$.
Provided $W$ includes at least $d_G(u_1)+1$ distinct weights,
we can hence weight $vu_1$ correctly, i.e. so that we avoid all conflicts mentioned above, with one weight from $W$,
since assigning different weights to $vu_1$ alters $\sigma_{w_G}(u_1)$ in distinct ways.

Now assume $vu_1$ has been weighted with the additional property that $\sigma_{w_{G'}}(v)+w_G(vu_1) \neq \sigma_{w_{G'}}(u_2)$.
Since that property holds,
Observation~\ref{observation:extending-to-an-edge} tells us that we can correctly extend $w_{G'}$ to $vu_2$
provided $W \setminus \{w_G(vu_1)\}$ includes at least $d_G(v)+d_G(u_2)-1$ distinct weights.
We hence need $W \setminus \{w_G(vu_1)\}$ to include that many distinct weights.

As explained above, $W$ necessarily includes at least $d_G(u_1)$ weights that were not assigned to $vu_1$.
Hence, to make sure, after weighting $vu_1$, that $W$ still includes at least $d_G(v)+d_G(u_2)-1$ distinct weights,
we need $W$ to include at least
$$\left( d_G(v)+d_G(u_2)-1\right) - d_G(u_1)$$
other weights.
This quantity can be negative, as, notably, $vu_1$ may need a lot of weights to be weighted.
Hence $$\mu = \left(d_G(u_1)+1\right)+\max\left\{0,d_G(v)+d_G(u_2)-d_G(u_1)-1\right\},$$
as claimed,
and, under the assumption that $W$ has size $\mu$,
we can achieve the extension of $w_{G'}$ to $G$
as described earlier.
\end{proof}

\medskip

In our proofs, we will also use the fact that, in some situations, 
pendant edges can easily be weighted
assuming we are provided enough distinct weights.

\begin{observation} \label{observation:remove-pendant}
Let $G$ be a graph having a pendant edge $vu$,
where $u$ is the degree-$1$ vertex,
such that $G':=G-\{uv\}$ admits a neighbour-sum-distinguishing edge-weighting $w_{G'}$.
Then, assuming we have a set $W$ of at least $d_{G}(v)$ distinct strictly positive weights,
we can extend $w_{G'}$ to a neighbour-sum-distinguishing edge-weighting of $G$
by assigning a weight of $W$ to $vu$.
\end{observation}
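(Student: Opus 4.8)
The plan is to proceed almost exactly as in the proof of Observation~\ref{observation:extending-to-an-edge}, but using the fact that $u$ is a leaf to save one weight. First I would note that since $\sigma_{w_{G'}}(u) = 0$ before weighting $vu$ (as $u$ is isolated in $G'$) and any strictly positive weight on $vu$ makes $\sigma_{w_G}(u) = w_G(vu) > 0 = \sigma_{w_G}(v)$ only if $v$ has no other incident edges; more carefully, the leaf $u$ imposes \emph{no} constraint of its own on the weight of $vu$ beyond what is needed to separate $u$ from $v$, and that separation is automatic as soon as $\sigma_{w_G}(v) \neq w_G(vu)$.

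Concretely, the key steps are: (i) observe that weighting $vu$ with a weight $x \in W$ completely determines $\sigma_{w_G}(u) = x$ and $\sigma_{w_G}(v) = \sigma_{w_{G'}}(v) + x$; (ii) the constraints to avoid are $\sigma_{w_G}(v) \neq \sigma_{w_{G'}}(v')$ for each of the $d_G(v) - 1$ neighbours $v'$ of $v$ other than $u$, plus the constraint $\sigma_{w_G}(u) \neq \sigma_{w_G}(v)$, i.e. $x \neq \sigma_{w_{G'}}(v) + x$, which is vacuously satisfied since $\sigma_{w_{G'}}(v)$ counts at least one positive weight (the edge $vu$ is pendant, but $v$ may still have other edges; if $v$ has degree $1$ in $G$ then $G$ has a $K_2$ component, which we may exclude, or handle trivially); (iii) thus there are at most $d_G(v) - 1$ forbidden values of $x$, and since distinct choices of $x$ yield distinct values of $\sigma_{w_G}(v)$, any set $W$ of at least $d_G(v)$ distinct strictly positive weights contains a valid choice.

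The one subtlety I would address explicitly is the separation of $u$ and $v$ themselves: we need $\sigma_{w_G}(u) = x \neq \sigma_{w_{G'}}(v) + x = \sigma_{w_G}(v)$, which holds iff $\sigma_{w_{G'}}(v) \neq 0$, i.e. iff $v$ has at least one incident edge in $G'$, equivalently $d_G(v) \geq 2$. If instead $d_G(v) = 1$, then $vu$ is an isolated edge and $G$ (or the relevant component) is a $K_2$; this case is outside the scope of the statement as phrased for useful graphs, but even then one checks the trivial bound directly. So assuming $d_G(v) \geq 2$ — which is the only interesting case and is implicit in the applications — the argument goes through. I expect the main (and really only) obstacle to be bookkeeping the edge case $d_G(v) = 1$ cleanly; the rest is a direct counting argument parallel to Observation~\ref{observation:extending-to-an-edge}, just with one fewer conflict because the leaf contributes none.
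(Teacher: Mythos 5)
Your argument is correct and is essentially the paper's own proof: the leaf $u$ imposes no constraint (the paper gets this from Observation~\ref{observation:situations}, you verify it directly via $\sigma_{w_{G'}}(v)>0$), leaving only the $d_G(v)-1$ conflicts with the neighbours of $v$ in $G'$, which the $d_G(v)$ available weights beat since distinct weights give distinct sums at $v$. Your explicit treatment of the degenerate case $d_G(v)=1$ is a minor extra care the paper leaves implicit (it is excluded by niceness in all applications), and does not change the argument.
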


\begin{proof}
Following Observation~\ref{observation:situations},
when extending $w_{G'}$ to $vu$,
we do not have to care whether $\sigma(u)$ gets equal to $\sigma(v)$.
We thus just have to make sure that $\sigma(v)$ does not get equal to the sum of weights incident to one of its neighbours in $G'$.
Recall that assigning distinct weights to $vu$ results in different sums as $\sigma(v)$.
Therefore, since $v$ has $d_G(v)-1$ neighbours in $G'$ while $W$ has size at least $d_G(v)$,
there is necessarily a weight in $W$ that can be assigned to $vu$
such that no conflict is created.
An extension of $w_{G'}$ to $G$ hence exists.
\end{proof}


\section{Classes of graphs agreeing with Conjecture~\ref{conjecture:main}} \label{section:support}

As mentioned in Section~\ref{section:introduction},
we directly benefit, in the context of Conjecture~\ref{conjecture:main}, from the investigations on antimagic labellings,
as antimagic graphs verify Conjecture~\ref{conjecture:main}.
Following the survey~\cite{Gal97} by Gallian,
the following classes of nice graphs hence agree with Conjecture~\ref{conjecture:main}.

\begin{theorem} \label{theorem:antimagic-classes}
The classes of known antimagic graphs notably include:
\begin{itemize}
	\item nice paths (Hartsfield, Ringel~\cite{HR90}),
	\item wheels (Hartsfield, Ringel~\cite{HR90}),
	\item nice regular graphs (B\'erci, Bern\'ath, Vizer~\cite{BBV15}),
	\item nice complete partite graphs (Alon, Kaplan, Lev, Roditty, Yuster~\cite{AKLRY04}).
\end{itemize}
Consequently, every of these graphs $G$ verifies $\chi^{e,1}_\Sigma(G) = |E(G)|$.
\end{theorem}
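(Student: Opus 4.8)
The plan is to split the statement into its two halves. The first half --- that nice paths, wheels, nice regular graphs and nice complete partite graphs are antimagic --- is not something I would reprove here; it is exactly the content of the cited works~\cite{HR90,BBV15,AKLRY04}, so I would simply invoke them. The only genuine content is the word ``consequently'', i.e.\ the implication that every antimagic graph $G$ satisfies $\chi^{e,1}_\Sigma(G)=|E(G)|$, and this is what I would actually argue.

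For the upper bound, I would start from an antimagic labelling $w$ of $G$, which by definition is an edge-injective $|E(G)|$-edge-weighting whose vertex-sum function $\sigma_w$ is injective on $V(G)$. In particular $\sigma_w(u)\neq\sigma_w(v)$ holds for every edge $uv$ of $G$, since adjacent vertices are a special case of distinct vertices; hence $w$ is neighbour-sum-distinguishing. As $w$ is moreover edge-injective and uses only weights from $\{1,\dots,|E(G)|\}$, it is precisely an edge-injective neighbour-sum-distinguishing $|E(G)|$-edge-weighting of $G$, which yields $\chi^{e,1}_\Sigma(G)\le|E(G)|$.

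For the matching lower bound I would reuse the remark already recorded right after Conjecture~\ref{conjecture:main}: any edge-injective $k$-edge-weighting of $G$ assigns $|E(G)|$ pairwise distinct weights chosen from $\{1,\dots,k\}$, which forces $k\ge|E(G)|$; thus $\chi^{e,1}_\Sigma(G)\ge|E(G)|$ for every nice graph. Combining the two inequalities gives $\chi^{e,1}_\Sigma(G)=|E(G)|$ for every antimagic graph, and applying this to each of the four classes listed above completes the proof.

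I do not anticipate a real obstacle: the argument is a short unwinding of the definitions of ``antimagic labelling'' and ``edge-injective neighbour-sum-distinguishing edge-weighting''. The only point requiring a little care is checking that the cited antimagicity results cover the stated classes in the ``nice'' (possibly disconnected) generality used here --- for instance disjoint unions of regular graphs, or of paths --- but this is a matter of quoting the references accurately rather than of genuine proof.
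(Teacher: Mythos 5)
Your proposal is correct and matches the paper's treatment: the paper offers no separate proof of this theorem, relying exactly on the cited antimagicity results together with the definitional observation (already made in the introduction) that an antimagic labelling is in particular an edge-injective neighbour-sum-distinguishing $|E(G)|$-edge-weighting, while edge-injectivity forces the lower bound $\chi^{e,1}_\Sigma(G)\ge|E(G)|$. Your extra caution about checking that the references cover the ``nice'' (possibly disconnected) versions of the classes is a reasonable refinement but does not change the argument.
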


When it comes to nice graphs with maximum degree~$2$,
it is easily seen, as we are assigning strictly positive weights only, 
that any edge-injective edge-weighting is neighbour-sum-distinguishing.
Disjoint unions of nice paths and cycles hence agree with Conjecture~\ref{conjecture:main}.

\begin{observation} \label{observation:small-degree}
Let $G$ be a nice graph with $\Delta(G)=2$.
Then any edge-injective edge-weighting of $G$ is neighbour-sum-distinguishing.
\end{observation}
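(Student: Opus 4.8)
The plan is to reduce the statement entirely to Observation~\ref{observation:situations}, which already covers exactly the degree configurations that can occur under the hypothesis $\Delta(G)=2$. First I would fix an arbitrary edge $uv$ of $G$ and an arbitrary edge-injective edge-weighting $w$ of $G$, and then argue by a short case analysis on the pair $(d(u),d(v))$. Since $\Delta(G)=2$, every vertex has degree $1$ or $2$, so only the cases $d(u)=d(v)=2$, or one endpoint of degree $1$ and the other of degree $2$, need to be considered; the case $d(u)=d(v)=1$ is excluded because it would make the component containing $uv$ a copy of $K_2$, contradicting that $G$ is nice.

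If $d(u)=d(v)=2$, then item~2 of Observation~\ref{observation:situations} immediately yields $\sigma(u)\neq\sigma(v)$. Concretely, writing $u'$ for the neighbour of $u$ other than $v$ and $v'$ for the neighbour of $v$ other than $u$, a quick check (using $u'\neq v$) shows that $uu'$ and $vv'$ are distinct edges, so edge-injectivity of $w$ forces $w(uu')\neq w(vv')$ and hence $\sigma(u)=w(uv)+w(uu')\neq w(uv)+w(vv')=\sigma(v)$. If instead, say, $d(u)=1$ and $d(v)=2$, then item~1 of Observation~\ref{observation:situations} applies; equivalently, $\sigma(u)=w(uv)$ while $\sigma(v)=w(uv)+w(vv')>w(uv)$ because $w$ assigns strictly positive weights only (as agreed at the start of Section~\ref{section:prelim}).

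Since these cases are exhaustive and the edge $uv$ was arbitrary, every edge of $G$ has distinctly weighted endpoints, so $w$ is neighbour-sum-distinguishing. I expect no real obstacle here: the only point requiring a moment's care is ruling out the degree-$1$/degree-$1$ case, which is precisely where the hypothesis that $G$ is nice is used, together with the standing convention that weights are strictly positive.
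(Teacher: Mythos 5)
Your proof is correct and follows essentially the same route as the paper, which treats this as an immediate consequence of Observation~\ref{observation:situations} (items~1 and~2) together with the standing convention that weights are strictly positive, the nice-ness hypothesis ruling out the degree-$1$/degree-$1$ case exactly as you note.
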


One of the main lines of research concerning antimagic labellings
is to determine whether nice trees are all antimagic.
In the following result, we prove that this question can be answered positively
when relaxed to edge-injective neighbour-sum-distinguishing edge-weightings.
We actually prove a stronger statement that will be useful in the next sections.

\begin{theorem}\label{theorem:trees}
Let $F$ be a nice forest.
Then, for every set $W$ of $|E(F)|$ distinct strictly positive weights,
there exists an edge-injective neighbour-sum-distinguishing $W$-edge-weighting of $F$.
In particular, we have $\chi_{\Sigma}^{e,1}(F)=|E(F)|$.
\end{theorem}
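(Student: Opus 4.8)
The plan is to prove the statement by induction on $|E(F)|$, which lets us peel off one carefully chosen edge at a time and feed a weight set of the correct size to the smaller forest. The base case $|E(F)| = 0$ is vacuous, and when $|E(F)| = 1$ the unique edge must be a pendant edge of a nontrivial component (since $F$ is nice), so any single weight works by Observation~\ref{observation:situations}(1). For the inductive step, fix a nice forest $F$ with $|E(F)| \geq 2$ and a weight set $W$ of size $|E(F)|$. I would root each nontrivial component of $F$ and look at a vertex $v$ at maximum distance from the root in some component, so that $v$ is adjacent to a collection of leaves and has a unique non-leaf neighbour $p$ (its parent), or else $v$ itself together with its leaves forms a star component.

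The key case analysis splits on the structure around such a deepest vertex $v$. First case: some component of $F$ is a star $K_{1,t}$ with $t \geq 2$; then I would assign weights to its edges arbitrarily from $W$ and handle the rest of $F$ by induction on the remaining forest with the remaining weights — this works because in a star every sum at a leaf is a single weight, all distinct, and the centre's sum is a sum of at least two weights, hence strictly larger. Second case: $v$ has a parent $p$ and at least one leaf child $u$; here I would delete the pendant edge $vu$, apply induction to $F - vu$ with $W$ minus one suitably chosen weight, and then extend. By Observation~\ref{observation:remove-pendant}, extending a neighbour-sum-distinguishing weighting of $F - vu$ to $vu$ only requires a set of at least $d_F(v)$ available distinct weights to avoid the $d_F(v) - 1$ neighbours of $v$ in $F - vu$. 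The main obstacle is bookkeeping: after the inductive call consumes $|E(F)| - 1$ weights, only one weight of $W$ is left for $vu$, so I cannot simply invoke Observation~\ref{observation:remove-pendant} as a black box. The fix is to reserve, before the inductive call, a small set of the largest weights of $W$ — specifically the $d_F(v)$ largest — to be used on the edges incident to $v$, so that when we return to $v$ we genuinely have $d_F(v)$ candidate weights in hand for its pendant edges; equivalently, one removes all edges at $v$ except $vp$ at once, applies induction to the smaller forest with $W$ minus the $d_F(v)$ largest weights, and then uses Observation~\ref{observation:situations}(3) to see that assigning the reserved (largest) weights to the leaf edges $vu_i$ forces $\sigma(v)$ to exceed every neighbour's sum, so no conflicts arise even at $p$.

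So the real content is choosing which weights to reserve and in what order to remove edges; once that is set up, each extension step is immediate from Observations~\ref{observation:situations}--\ref{observation:remove-pendant}. I expect the trickiest point to be the interaction at the parent $p$: after reinserting the leaf edges at $v$ with large weights, we still need $\sigma(v) \neq \sigma(p)$, and we need the edge $vp$ itself weighted consistently; handling $vp$ cleanly may require treating it as the last edge removed and applying Observation~\ref{observation:extending-to-an-edge} with a set of $d_F(v) + d_F(p) - 1$ weights, which again must be reserved up front. Carefully partitioning $W$ into the portion handed to the inductive subproblem and the portion reserved for the local star at $v$ (its $d_F(v)$ largest elements plus, if $vp$ is treated separately, a few more) is the crux; everything else is routine.
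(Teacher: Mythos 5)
Your plan breaks down exactly at the point you flag as ``the trickiest'': the conflict between the deepest non-leaf vertex $v$ and its parent $p$. After the inductive call, $w(vp)$ and $\sigma(p)$ are already fixed, and the weights you reserved for the pendant edges at $v$ are forced (up to a permutation that does not change $\sigma(v)$). Reserving the largest weights only yields $\sigma(v)\neq\sigma(p)$ via item~3 of Observation~\ref{observation:situations} when $d_F(v)\geq d_F(p)$; a deepest vertex whose children are all leaves can perfectly well have $d_F(v)<d_F(p)$, and then the sum of the $d_F(v)-1$ reserved largest weights can coincide with the sum of the $d_F(p)-1$ weights already sitting at $p$, so $\sigma(v)=\sigma(p)$ and you have no freedom left to repair it. Your proposed fix --- also deleting $vp$, treating it as the last edge and invoking Observation~\ref{observation:extending-to-an-edge} with $d_F(v)+d_F(p)-1$ reserved weights --- cannot be carried out in this exact-count setting: the induction hypothesis hands the smaller forest a set of exactly $|E(F')|$ weights and uses all of them, so the number of reserved weights always equals the number of deleted edges; by the time you reach the last edge $vp$ you hold exactly one weight, not $d_F(v)+d_F(p)-1$. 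Extra slack of that kind is precisely what the weaker bounds of Sections~\ref{section:upper-bound} and~\ref{section:bound-classes} buy, but it is unavailable when the target is $\chi^{e,1}_\Sigma(F)=|E(F)|$.

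This is not a cosmetic issue: the bulk of the paper's proof exists to eliminate the configurations where your argument fails. There, a minimal counterexample is whittled down through structural claims (pendant paths have length at most~$2$, pendant paths at last multifathers have length~$1$, and, via a three-edge deletion with a weight-placement/swap argument, the father of a deepest last multifather has $v^*$ as its only child, hence degree~$2$), and only after all of that does the ``remove the pendant edges at $v^*$ and give them the largest weights'' step go through, because then $d_F(v^*)\geq 3>2=d_F(f(v^*))$ and Observation~\ref{observation:situations} applies legitimately (see Claim~\ref{claim:grandFatherDegree}). Your proposal reproduces, in effect, only this final easy case (plus the star/subdivided-star case) and leaves the genuinely hard configurations --- a high-degree parent, or a parent with several children of mixed types --- unresolved; there is also a minor bookkeeping slip (reserving $d_F(v)$ weights while deleting only $d_F(v)-1$ pendant edges), but the missing treatment of the parent is the substantive gap.
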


\begin{proof}
If $\Delta(F)=2$, then the result follows from Observation~\ref{observation:small-degree}.
So the claim holds whenever $F$ has size~$2$.
Assume now that the claim is false, and let $F$ be a counterexample that is minimum in terms of $n_F+m_F$,
where $n_F:=|V(F)|$ and $m_F:=|E(F)|$.
By the remark above, we have $m_F \geq 3$.
Let $W:=\{\alpha_1, ..., \alpha_{m_F}\}$ be a set of distinct strictly positive integers such that $F$ does not admit an edge-injective neighbour-sum-distinguishing $W$-edge-weighting. 
Free to relabel the weights in $W$,
we may suppose that $\alpha_1 < ... < \alpha_{m_F}$.
Due to the minimality of $F$,
we may assume that $F$ is a tree (as otherwise we could invoke the induction hypothesis).
Furthermore, we may assume that $F$ has maximum degree at least~$3$ (at otherwise Observation~\ref{observation:small-degree} would apply).

We now successively show that $F$, because it is a counterexample to the claim, cannot contain certain structures,
until we reach the point where $F$ is shown to not exist at all, a contradiction.
In particular, we focus on the length of the pendant paths of $F$,
where a \textit{pendant path} of $F$ is a maximal path $v_k...v_1$, where $k \geq 2$, such that $d(v_k) \geq 3$, 
$d(v_{k-1})=...=d(v_2)=2$, and $d(v_1)=1$.
In the case where $k=2$, we note that the pendant path is a pendant edge,
in which case $v_k=v_2$ and we have $d(v_2) \geq 3$.
Since $\Delta(F) \geq 3$,
there are at least three pendant paths in $F$.

We start off by showing that the pendant paths of~$F$ all have length at most~$2$.

\begin{claim} \label{claim:3chain}
Every pendant path of $F$ has length at most~$2$.
\end{claim}

\begin{proof}
Assume $F$ has a pendant path $P:=v_k...v_1$ with $k \geq 4$,
where $d(v_k) \geq 3$.
In this case, let $F':=F-\{v_{k-1}v_{k-2},...,v_2v_1\}$ be the tree obtained by removing, from $F$, all edges of $P$ but the one incident to $v_k$.
Clearly, $F'$ is nice and, due to the minimality of $F$,
there exists an edge-injective neighbour-sum-distinguishing $\{\alpha_1, ... , \alpha_{m_{F'}}\}$-edge-weighting $w_{F'}$ of $F'$, 
where $m_{F'}:=|E(F')|$.
To prove that the claim holds,
we have to prove that we can extend $w_{F'}$ to the edges $v_{k-1}v_{k-2},...,v_2v_1$, hence to $F$,
using weights $\alpha_{m_{F'}+1},...,\alpha_{m_F}$,
so that we get an edge-injective neighbour-sum-distinguishing $W$-edge-weighting of $F$,
a contradiction.

Due to the length of $P$, we have $|\{\alpha_{m_{F'}+1},... ,\alpha_{m_F}\}| \geq 2$.
When weighting the edges $v_{k-1}v_{k-2},...,v_2v_1$,
we note that we cannot create any sum conflicts involving any two consecutive vertices in $\{v_1,...,v_{k-1}\}$.
That is, the incident sums of any two of these vertices can never get equal.
This is according to Observation~\ref{observation:situations} since we are assigning weights injectively.
Hence, when extending $w_{F'}$,
we just have to make sure that $\sigma(v_{k-1})$ gets different from $\sigma(v_k)$,
which is possible as we have at least two distinct edge weights to work with.
So we can assign a weight to $v_{k-1}v_{k-2}$ which avoids that conflict,
and then arbitrarily extend the weighting to the edges $v_{k-2}v_{k-3},...,v_2v_1$.
This yields an edge-injective neighbour-sum-distinguishing $W$-edge-weighting of $F$.
\end{proof}

\medskip

Now designate a vertex $r$ with degree at least~$3$ of $F$ as being the \textit{root} of $F$.
This naturally defines, in the usual way, an orientation of $F$ from its root to its leaves.
For every vertex $v$ of $F$, the \textit{father} $f(v)$ of $v$ is the neighbour of $v$ which is the closest from~$r$ (if any).
Conversely, the \textit{descendants} of $v$ are all vertices, different from $v$, in the subtree of $F$ rooted at $v$ (if any).
We note that $r$ has no father, while the leaves of $F$ have no descendants.
The descendants of $v$ adjacent to $v$ (if any) are called its \textit{children}.

A \textit{multifather} $v$ of $F$ is a vertex with degree at least~$3$,
i.e. having at least two children.
In case all descendants of $v$ have degree at most~$2$,
we call $v$ a \textit{last multifather} of $F$.
In other words, a last multifather is a vertex with at least two pendant paths attached.
Since $\Delta(F) \geq 3$, there are last multifathers in $F$.

To further study the structure of $F$,
we now prove properties of its last multifathers,
still under the assumption that $F$ is rooted at a vertex $r$ with degree at least~$3$.

\begin{claim} \label{claim:root}
Vertex $r$ is not a last multifather.
\end{claim}

\begin{proof}
Assume the contrary.
Then $r$ is the only vertex with degree at least~$3$ of $F$.
In other words, $F$ is a subdivided star.
Then it should be clear that assigning the weights $\alpha_{m_{F}}, \alpha_{m_{F}-1}, ... ,\alpha_1 $, following this order, 
to the edges of $F$ as they are encountered during a breadth-first search algorithm performed from $r$
results in a neighbour-sum-distinguishing edge-weighting of $F$.
To be convinced of this statement,
one can e.g. refer to Observation~\ref{observation:situations}.
\end{proof}

\medskip

Due to Claim~\ref{claim:root},
we may assume that the root $r$ of $F$ is not a last multifather.
Then all last multifathers of $F$ (there are some)
are different from $r$,
and hence have a father.
We now refine Claim~\ref{claim:3chain} to the following.

\begin{claim} \label{claim:2chain}
Every pendant path attached to a last multifather of $F$ has length~$1$.
\end{claim}

\begin{proof}
Let $v \neq r$ be a last multifather of $F$,
and assume $v$ is incident to pendant paths with length~$2$.
We recall that all pendant paths attached to $v$ have length at most~$2$ (Claim~\ref{claim:3chain}),
and, since $v$ is a last multifather,
it is incident to at least two pendant paths.
Let $F'$ be the tree obtained from $F$ by removing all pendant paths attached to $v$.
Because $m_{F'}:=|E(F')|$ is smaller than $m_F$,
there exists an edge-injective neighbour-sum-distinguishing $\{\alpha_1, ... , \alpha_{m_{F'}}\}$-edge-weighting $w_{F'}$ of $F'$.
For contradiction,
we prove below that $w_{F'}$ can be extended correctly to the pendant paths attached to $v$
using the weights among $\{\alpha_{m_{F'}+1},... ,\alpha_{m_F}\}$ injectively.

Let $b \geq 1$ be the number of pendant paths of length~$2$ attached to $v$ in $F$,
and let $vx_1y_1,...,vx_by_b$ denote those paths (so that the $x_i$'s have degree~$2$ in $F$,
while the $y_i$'s have degree~$1$).
Vertex $v$ is also adjacent to $c \geq 0$ leaves $x_{b+1},...,x_{b+c}$, which are, in some sense, pendant paths of length~$1$.
Since $v$ is a multifather, we recall that $b+c=d_F(v)-1 \geq 2$.

We extend $w_{F'}$ to the edges of the pendant paths attached to $v$ in the following way.
First, we injectively arbitrarily assign the $d_F(v)-1$ weights in $\{\alpha_{m_F-d_F(v)+2},... , \alpha_{m_F}\}$ to the edges $vx_2,...,vx_{b+c}$.
After that, we assign to the edge $vx_1$ one of the weights $\alpha_{m_F-d_F(v)+1}$ or $\alpha_{m_F-d_F(v)}$
chosen so that $\sigma_{w_F}(v)$ is different from the sum of weights incident to $f(v)$, the father of $v$, by $w_{F'}$.
We then assign to $x_1y_1$ the one weight of $\alpha_{m_F-d_F(v)+1}$ or $\alpha_{m_F-d_F(v)}$ not assigned to $vx_1$.
We note that 
no matter how we complete the extension of $w_{F'}$,
eventually $\sigma_{w_F}(v)$ will be strictly bigger than $\sigma_{w_F}(x_1)$.

We finish the extension of $w_{F'}$ to $F$
by arbitrarily injectively assigning the remaining non-used smaller weights to the edges $x_1y_1,...,x_by_b$.
Because all the $x_i$'s have degree~$2$ and the $y_i$'s have degree~$1$,
no conflict may arise between those vertices (Observation~\ref{observation:situations}).
Furthermore, since the degree of $v$ is larger than the degree of the $x_i$'s,
and the weights assigned to the $vx_i$'s are bigger than the weights assigned to the $x_iy_i$'s (with possibly the exception of $vx_1$ and $x_1y_1$, which we have discussed above),
it should be clear that no conflict may arise between $v$ and the $x_i$'s (again according to Observation~\ref{observation:situations}).
So we eventually get an edge-injective neighbour-sum-distinguishing $W$-edge-weighting of $F$, a contradiction.
\end{proof}

\medskip

We finally study last multifathers of $F$ being at maximum distance from $r$.
We call these vertices the \textit{deepest last multifathers} of $F$.
From now on, we focus on a fixed deepest last multifather $v^*$ of $F$, which we choose arbitrarily.
In the upcoming proof, 
for any vertex $v$ of $F$,
we denote by $F_v$ the subtree of $F$ rooted at $v$.
Recall that all children of a last multifather are leaves (Claim~\ref{claim:2chain}). 

\begin{claim}\label{claim:grandfatherlevel}
Every last multifather $v$ of $F_{f(v^*)}$ is a child of $f(v^*)$.
In other words, $v$ is a deepest last multifather of $F$.
\end{claim}

\begin{proof}
The claim follows from the fact that if there exists a descendant $v \neq v^*$ of $f(v^*)$ being at distance at least~$2$ from $f(v^*)$,
then $v$ would, in $F$, be at greater distance from $r$ than $v^*$ is.
This would contradict the fact that $v^*$ is a deepest last multifather.
\end{proof}

\medskip

\begin{figure}[t]
	\center 
	\begin{tikzpicture}[inner sep=0.7mm]
		\draw [rounded corners,color=gray,fill=gray!20] (0,0)--(2,-4)--(-2,-4)--cycle;	
		\node at (0,-2.5) {$F$};
	
	    	\node[draw,circle,line width=1pt, fill=black] (r) at (0,0)[label=above:$r$]{};	
	    	\node[draw,circle,line width=1pt, fill=black] (fv) at (0,-4)[label=above:$f(v^*)$]{};	
	    	
	    	\node[draw,circle,line width=1pt, fill=black] (v1) at (-1.5,-5)[label=left:\scriptsize type-$1$]{};	
	    	\node[draw,circle,line width=1pt, fill=black] (v2) at (0,-5)[label=left:\scriptsize type-$2$]{};	
	    	\node[draw,circle,line width=1pt, fill=black] (v22) at (0,-6){};
	    	
	    	\node[draw,circle,line width=1pt, fill=black] (v3) at (1.5,-5)[label=right:\scriptsize type-$3$, label=above:$v^*$]{};	
	    	\node[draw,circle,line width=1pt, fill=black] (v31) at (3,-6){};
	    	\node[draw,circle,line width=1pt, fill=black] (v32) at (2.5,-6){};
	    	\node[draw,circle,line width=1pt, fill=black] (v33) at (2,-6){};
	    	\node[draw,circle,line width=1pt, fill=black] (v34) at (1.5,-6){};
	    	\node[draw,circle,line width=1pt, fill=black] (v332) at (2,-7){};
	    	\node[draw,circle,line width=1pt, fill=black] (v342) at (1.5,-7){};

	    	\draw[-,line width=1pt] (fv) -- (v1);
	    	\draw[-,line width=1pt] (fv) -- (v2) -- (v22);
	    	\draw[-,line width=1pt] (fv) -- (v3);
	    	\draw[-,line width=1pt] (v3) -- (v33);	\draw[-,line width=1pt] (v3) -- (v34);	\draw[-,dotted,line width=1pt] (v33) -- (v34);	\draw[-,dotted,line width=1pt] (v332) -- (v342);
	    	\draw[-,line width=1pt] (v3) -- (v31);	\draw[-,line width=1pt] (v3) -- (v32);	\draw[-,dotted,line width=1pt] (v31) -- (v32);
	    	\draw[-,line width=1pt] (v33) -- (v332);		\draw[-,line width=1pt] (v34) -- (v342);
	\end{tikzpicture}

\caption{Illustration of the three child types mentioned in the proof of Theorem~\ref{theorem:trees}.}
\label{figure:tree}
\end{figure}
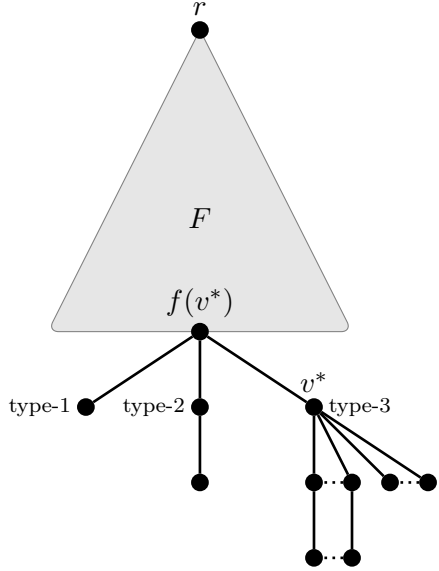

Recall that $f(v^*)$ cannot be incident, in $F$, to a pendant path with length at least~$3$ (Claim~\ref{claim:3chain}).
Hence, every child of $f(v^*)$ is either a leaf (type-1), a degree-$2$ vertex adjacent to a leaf (type-2, i.e. the inner vertex of a pendant path with length~$2$),
or a deepest last multifather (type-3).
See Figure~\ref{figure:tree} for an illustration.
Furthermore, we know that $f(v^*)$ is adjacent to at least one type-3 vertex, which is $v^*$.
In the following proof,
we show that $v^*$ is actually the only child of $f(v^*)$ in $F$. 

\begin{claim}\label{claim:grandFatherDegree}
Vertex $v^*$ is the only child of $f(v^*)$ in $F$.
\end{claim}

\begin{proof}
Suppose the claim is false, and let $v \neq v^*$ be another child of $f(v^*)$. Let $x_1$ and $x_2$ be two leaves adjacent to $v^*$, which exist since $v^*$ is a last multifather,
and all pendant paths attached to $v^*$ have length~$1$ (Claim~\ref{claim:2chain}).

\medskip

Assume first that $v$ is type-2 or type-3, or, in other words, that $d_F(v) \geq 2$. 
In that case, $v$ is adjacent to at least one leaf, say~$y$.
We here consider $F':=F-\{vy,v^*x_1,v^*x_2\}$.
Note that $F'$ remains nice and has fewer edges than $F$.
Due to the minimality of $F$, 
there hence exists an edge-injective neighbour-sum-distinguishing $\{\alpha_{1},...,\alpha_{m_{F'}}\}$-edge-weighting $w_{F'}$ of $F'$,  where $m_{F'}:=|E(F')|$.
We show below that $w_{F'}$ can be extended to the three removed edges
with injectively using the three edge weights $\alpha_{m_F-2},\alpha_{m_F-1},\alpha_{m_F}$,
yielding an edge-injective neighbour-sum-distinguishing $W$-edge-weighting $w_{F}$ of $F$, 
a contradiction.

We first assign a weight to $v^*x_1$
based on the conflicts that may happen when weighting $vy$.
When assigning any of the three weights to $vy$,
the only problem which may occur,
recall Observation~\ref{observation:situations},
is that $\sigma_{w_F}(v)$ gets equal to $\sigma_{w_{F'}}(f(v^*))$.
If assigning one of the three weights $\alpha_{m_F-2},\alpha_{m_F-1},\alpha_{m_F}$ to $vy$ indeed results in that conflict,
we assign that weight to $v^*x_1$.
Otherwise, we assign any of the three weights to $v^*x_1$.
In any case, no conflict may arise as $\sigma_{w_F}(v^*)$ is still not determined.

We are now left with two weights, which we must assign to $v^*x_2$ and $vy$.
Due to the choice of the weight assigned to $v^*x_1$,
we note that no problem may occur when weighting $vy$.
Hence, we just have to weight $v^*x_2$ correctly and assign the remaining weight to $vy$.
When weighting $v^*x_2$, the only problem which may occur,
according to Observation~\ref{observation:situations},
is that $\sigma_{w_F}(v^*)$ gets equal to $\sigma_{w_{F'}}(f(v^*))$.
But, since we have two distinct weights to work with,
one of them can be assigned to $v^*x_2$ so that this conflict is avoided.
Thus we can weight $v^*x_2$ correctly and eventually weight $vy$ with the remaining weight,
resulting in the claimed $w_F$.

\medskip

We may now assume that all children, including $v$, of $f(v^*)$ different from $v^*$ are type-1, i.e. leaves.
The contradiction can then be obtained quite similarly as in the previous case
but with setting $F':=F-\{f(v^*)v,v^*x_1,v^*x_2\}$.
When weighting $f(v^*)v$,
we have to make sure, if $f(v^*) \neq r$, that $\sigma_{w_F}(f(v^*))$ does not get equal to $\sigma_{w_{F'}}(f(f(v^*)))$.
Note that if $f(v^*)=r$, then the situation is actually easier
as there is one less conflict to consider.
If one of the three available weights $\alpha_{m_F-2},\alpha_{m_F-1},\alpha_{m_F}$, when assigned to $f(v^*)v$,
yields a conflict involving $f(v^*)$ and $f(f(v^*))$,
then we assign that weight to $v^*x_1$.
Otherwise, we assign any weight to $v^*x_1$.
This ensures that, when assigning any of the two remaining weights to $f(v^*)v$,
no conflict may involve $f(v^*)$ and $f(f(v^*))$.
We finally arbitrarily assign the two remaining weights to $v^*x_2$ and $f(v^*)v$.
If this results in a neighbour-sum-distinguishing edge-weighting $w_F$ of $F$,
then we are done.
Otherwise, it means that $\sigma_{w_F}(v^*)=\sigma_{w_F}(f(v^*))$.
In that case, note that, because all assigned edge weights are distinct,
when swapping the values assigned to $v^*x_2$ and $f(v^*)v$ by $w_F$
that conflict cannot remain.
Furthermore, according to the remarks above,
we still do not create any sum conflict involving $f(v^*)$ and $f(f(v^*))$.
After the swapping operation $w_F$ hence gets neighbour-sum-distinguishing.
\end{proof}

\medskip

We are now ready to finish off the proof
by showing that, under all information we have obtained, 
$F$ actually admits an edge-injective neighbour-sum-distinguishing $W$-edge-weighting, a contradiction.

From Claim~\ref{claim:grandFatherDegree},
we get that $d_F(f(v^*))=2$, as $v^*$ is not the root of $F$,
so $f(f(v^*))$ exists.
Let $x_1,...,x_k$ be the $k \geq 2$ leaves attached to $v^*$ in $F$, which exist since $v^*$ is a type-3 vertex.
Now consider the tree $F':=F-\{v^*x_1,...,v^*x_k\}$ with size $m_{F'}:=|E(F')|$.
Due to the minimality of $F$, there exists an edge-injective neighbour-sum-distinguishing $\{\alpha_{1},...,\alpha_{m_{F'}}\}$-edge-weighting $w_{F'}$ of $F'$.
We extend $w_{F'}$ to the $k$ removed edges so that an edge-injective neighbour-sum-distinguishing $W$-edge-weighting $w_F$ of $F$ is obtained, a contradiction.
To that aim, we arbitrarily injectively assign the weights $\alpha_{m_F-k+1},...,\alpha_{m_F}$ to the pendant edges $v^*v_1,...,v^*v_k$ attached to $v^*$.
Recall that we cannot get sum conflicts involving $v^*$ and the $v_i$'s according to Observation~\ref{observation:situations}.
Furthermore, we have $d_F(v^*) \geq 3$ while $d_F(f(v^*))=2$ (Claim~\ref{claim:grandFatherDegree}),
and we have used the $k$ biggest weights of $W$ to weight the edges incident to $v^*$.
From this and Observation~\ref{observation:situations}, we get that, necessarily, $\sigma_{w_F}(v^*)>\sigma_{w_{F'}}(f(v^*))$.
So $w_F$ is neighbour-sum-distinguishing.
\end{proof}


\section{General upper bounds} \label{section:upper-bound}

Towards Conjecture~\ref{conjecture:main}, we start off by exhibiting, for any nice graph $G$,
an upper bound on $\chi^{e,1}_\Sigma(G)$ of the form $k \cdot |E(G)|$,
where $k$ is a fixed constant.

\medskip

It turns out, first, that some results towards the 1-2-3 Conjecture
can be extended to the edge-injective context,
hence yielding bounds to our context.
This is in particular the case of the weighting algorithm by Kalkowski, Karo\'nski and Pfender from~\cite{KKP10},
which was designed to prove that $\chi^e_\Sigma(G) \leq 5$ holds for every nice graph $G$.
In very brief words, this algorithm initially assigns the list of weights $\{1,2,3,4,5\}$ to every edge of $G$,
which contains the possible weights that any edge can be assigned at any moment of the algorithm.
The algorithm then linearly processes the vertices of $G$
with possibly adjusting some incident edges weights (but staying in the list $\{1,2,3,4,5\}$)
so that sum conflicts are avoided around any vertex considered during the course.

It is easy to check that this algorithm also works
under the assumption that every edge of $G$ is assigned a (possibly unique) list of five allowed consecutive weights $\{\alpha-2,\alpha-1,\alpha,\alpha+1,\alpha+2\}$.
In particular, when applied with non-intersecting such lists assigned to the edges,
the algorithm yields an edge-injective neighbour-sum-distinguishing edge-weighting,
as every edge weight can be assigned to at most one edge. 
So, applying the algorithm on a nice graph $G$ with edges $e_0,...,e_{m-1}$ where each edge $e_i$ is assigned the list $\{5i+1,5i+2,5i+3,5i+4,5i+5\}$
results in an edge-injective neighbour-sum-distinguishing $(5 \cdot |E(G)|)$-edge-weighting of $G$.
From this, we get that $\chi^{e,1}_\Sigma(G) \leq 5 \cdot |E(G)|$ holds for every nice graph $G$.

\medskip

The $5 \cdot |E(G)|$ bound on $\chi^{e,1}_\Sigma(G)$ above can actually be improved down to $2 \cdot |E(G)|$
by means of a careful inductive proof scheme, which we describe in the following proof.
We actually prove (here and further) a stronger statement to get rid of the non-connected cases.

\begin{theorem} \label{theorem:2m}
Let $G$ be a nice graph.
Then, for every set $W$ of $2 \cdot |E(G)|$ distinct strictly positive weights,
there exists an edge-injective neighbour-sum-distinguishing $W$-edge-weighting of $G$.
In particular, we have $\chi_{\Sigma}^{e,1}(G) \leq 2 \cdot |E(G)|$.
\end{theorem}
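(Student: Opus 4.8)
The plan is to prove the stronger statement by induction on $|E(G)|$, reducing at each step to a smaller nice graph on which the induction hypothesis provides a $W'$-edge-weighting for a suitable subset $W'$ of $W$, and then extending correctly to the deleted edge(s) using the two largest remaining weights. As base cases, one uses Observation~\ref{observation:small-degree} when $\Delta(G) \leq 2$, and one may assume $G$ connected (apply the induction hypothesis componentwise otherwise) with $\Delta(G) \geq 3$ and $|E(G)| \geq 3$; write $W = \{\alpha_1 < \dots < \alpha_{2m}\}$ where $m := |E(G)|$.

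The key reduction is to find a vertex $v$ of degree at least $3$ together with one or two incident edges whose deletion leaves the graph nice, and such that the cost $\mu$ from Observation~\ref{observation:cut2edges} (or $d(v)$ from Observation~\ref{observation:remove-pendant}) is at most the number of weights one gains back. First I would look at a vertex $v$ of maximum degree that is \emph{as deep as possible} in some rooted structure, or more concretely, exploit that a nice graph with $\Delta \geq 3$ must contain either a pendant path hanging off a high-degree vertex or a "low" vertex of degree at least $3$ whose deletion behaves well. The cleanest lever is: if $v$ has degree $d \geq 3$ and we delete two edges $vu_1, vu_2$ incident to $v$ with $d(u_1) \geq d(u_2)$, then Observation~\ref{observation:cut2edges} tells us $\mu = (d(u_1)+1) + \max\{0, d(v)+d(u_2)-d(u_1)-1\}$ weights suffice to extend. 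Deleting two edges frees up exactly $2$ new weights and removes $2$ edges from the graph, so by induction we have $2(m-2)$ weights for $G' := G - \{vu_1,vu_2\}$ and then $\alpha_{2m-3}, \alpha_{2m-2}, \alpha_{2m-1}, \alpha_{2m}$ — that is $4$ weights — available for the two deleted edges; so it is enough that $\mu \leq 4$. This holds whenever $d(u_1)+1 \leq 4$ (i.e.\ $d(u_1) \leq 3$) and $d(v)+d(u_2) \leq d(u_1)+1+(4 - d(u_1) - 1) = 4$... which is too restrictive, so one instead wants to choose the deleted edges so that the larger-degree neighbour absorbs the cost: one should arrange $d(u_1) \geq d(v) + d(u_2) - 1$, i.e.\ pick $u_1$ to be a highest-degree neighbour, making $\mu = d(u_1)+1$, and then one needs $d(u_1) \leq 3$. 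When no such configuration exists — e.g.\ all neighbours of every degree-$\geq 3$ vertex have degree $\geq 3$ as well — one must instead peel pendant edges/paths: by Observation~\ref{observation:remove-pendant}, deleting a pendant edge at $v$ costs $d_G(v)$ weights while freeing up $2$; combining the deletion of several pendant edges at the same vertex $v$ (there must be at least $d(v) - (\text{number of non-leaf neighbours})$ of them in the worst spot) lets the $2 \cdot (\#\text{deleted})$ freed weights pay for a total cost of roughly $(\#\text{deleted}) \cdot d(v)$, which works once $d(v) \le 2 \cdot(\#\text{deleted})$; carefully, one deletes \emph{all but one} edge at a deepest high-degree vertex, as in the proof of Theorem~\ref{theorem:trees}.

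So the structural heart of the argument is a careful case analysis selecting, in any nice graph with $\Delta \geq 3$, a reducible configuration: (i) a pendant path of length $\geq 3$ (shorten it, as in Claim~\ref{claim:3chain}); (ii) a vertex $v$ of degree $\geq 3$ adjacent to at least two vertices of degree $\leq 2$, where one deletes the two corresponding incident edges and checks $\mu \leq 4$ via Observation~\ref{observation:cut2edges} (here $d(u_2) \le 2$, and if $d(u_1) \le 2$ too then $\mu \le 3 + \max\{0,d(v)-2\}$, which exceeds $4$ only if $d(v) \ge 4$, forcing a refinement where one deletes more edges at $v$); (iii) otherwise, every degree-$\geq 3$ vertex has at most one neighbour of degree $\leq 2$, which forces a dense-ish 2-core and one reduces by deleting two edges at a carefully chosen vertex so the big-degree neighbour swallows the cost. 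The extension steps themselves are routine given the three observations — the genuine obstacle is organizing the case analysis so that in \emph{every} case the number of weights freed up (twice the number of deleted edges) is at least the extension cost $\mu$. I expect the trickiest case to be a vertex of high degree all of whose neighbours also have high degree, where a single two-edge deletion does not obviously suffice and one must either delete more edges at once or argue that such a vertex, chosen to be "extremal" (deepest in a DFS/BFS layering, or of locally maximum degree), always admits a neighbour that can absorb the cost — mirroring the "deepest last multifather" device of the tree proof but now without the luxury of acyclicity.
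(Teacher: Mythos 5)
Your plan has a genuine gap precisely at the case you yourself flag as the trickiest: a graph in which every vertex has large degree, e.g.\ a $d$-regular graph with $d$ large. Your reduction deletes one or two edges, which frees only $2$ or $4$ weights, while the extension cost coming from Observation~\ref{observation:cut2edges} (or Observation~\ref{observation:remove-pendant}) grows linearly with the degrees involved: even in the most favourable arrangement you need $\mu=d(u_1)+1\le 4$, i.e.\ some neighbour of degree at most $3$, and no such configuration exists in dense graphs. Your case~(iii) ("the big-degree neighbour swallows the cost", choose an extremal vertex in a DFS/BFS layering) does not resolve this --- no choice of one or two edges at a vertex of a $10$-regular graph makes the local counting from the Observations close, and you do not supply any alternative argument. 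So as written the induction cannot be completed for exactly the graphs where the factor $2$ in $2\cdot|E(G)|$ ought to be doing its work.

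The paper's proof avoids local edge-deletion altogether. It removes an entire maximum-degree vertex $v^*$, classifies the components of $G-v^*$ as good, bad (isomorphic to $K_2$) or empty, applies induction only to the union $H$ of the good components with the $2|E(H)|$ smallest weights, and then weights all remaining edges greedily: the edges from $v^*$ into good components receive weights from the very top of $W$, one at a time, each time dodging only the at most $\Delta-1$ sums already determined at the other endpoint (for which having at least $\Delta+1$ unused weights suffices), and the edges of or into bad components receive the next weights in decreasing order along a BFS from $v^*$. The crucial point your proposal misses is that $v^*$ itself never needs to be handled by the counting observations at all: since every weight on an edge incident to $v^*$ exceeds every weight on an edge incident to its neighbours, Observation~\ref{observation:situations}(3) distinguishes $v^*$ automatically, and the budget closes because deleting a degree-$\Delta$ vertex (plus the bad components) frees a number of weights proportional to the number of edges to be re-weighted, namely $2(\Delta+ \text{bad-component edges})$ versus the roughly $\Delta+1$ choices needed per edge. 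If you want to salvage your approach you would need an analogous global device --- deleting a whole vertex (or star) and reserving the largest weights for its incident edges --- rather than the two-edge surgery of Observation~\ref{observation:cut2edges}.
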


\begin{proof}
The proof is by induction on $n_G+m_G$, where $n_G:=|V(G)|$ and $m_G:=|E(G)|$.
As it can easily be checked that the claim is true for small values of $n_G+m_G$,
we proceed to the induction step.
Consider hence a value of $n_G+m_G$
such that the claim is true for smaller values of this sum.

\medskip

We may assume that $G$ is connected,
as otherwise induction could be invoked on the different connected components of $G$.
Set $\Delta:=\Delta(G)$.
Since we may assume that $m_G \geq 4$ and $G$ is nice, we clearly have $\Delta \geq 2$.
We may even assume that $\Delta \geq 3$,
as otherwise $G$ would admit an edge-injective neighbour-sum-distinguishing $W$-edge-weighting
according to Observation~\ref{observation:small-degree}.
Consider any vertex $v^*$ of $G$ verifying $d_G(v^*)=\Delta$
and denote by $u_1, ... ,u_\Delta$ the neighbours of $v^*$ in $G$.

Set $G':=G-v^*$.
Note that $G'$ may include connected components isomorphic to $K_2$,
and thus be not nice.
In this context, we say that a component of $G'$ is \textit{empty} if it has no edge,
\textit{bad} if it is isomorphic to~$K_2$, and \textit{good} otherwise.
Basically, a bad component of $G'$ is an edge to which $v^*$ is joined in $G$:
either $v^*$ is adjacent to the two ends of that edge, 
or $v^*$ is adjacent to only one of the two ends.

If $G'$ does not have good components,
then $G$ is a connected graph whose only vertex with degree at least~$3$ is $v^*$
such that $G'$ consists of isolated vertices and isolated edges only.
In particular, all vertices of $G$ but $v^*$ have degree at most~$2$,
and every degree-$2$ vertex $u_i$ adjacent to $v^*$ is either adjacent to another degree-$2$ vertex $u_j$ adjacent to $v^*$,
or adjacent to a degree-$1$ vertex.
In such a situation,
assuming $W:=\{\alpha_1,...,\alpha_{2m_G}\}$ where $\alpha_1 < ... < \alpha_{2m_G}$,
it can easily be seen that assigning decreasing weights~$\alpha_{2m_G},...,\alpha_1$, following this order, 
to the edges of $G$ as they are encountered while performing a breadth-first search algorithm from $v^*$,
results in an edge-injective neighbour-sum-distinguishing edge-weighting of $G$.
This notably follows as a consequence of Observation~\ref{observation:situations}.

\medskip

Hence we may assume that $G'$ has good connected components $C_1, C_2, ... $ .
Let $H$ denote the union of the $C_i$'s, and set $m_{H}:=|E(H)|$. 
Since the $C_i$'s are nice, so is $H$.
Furthermore, we have that $m_{H} < m_G$.
According to the induction hypothesis, there hence exists an edge-injective neighbour-sum-distinguishing $\{\alpha_1,...,\alpha_{2m_H}\}$-edge-weighting $w_{H}$ of $H$.
In order to get an edge-injective neighbour-sum-distinguishing $W$-edge-weighting $w_G$ of $G$, we eventually need to extend $w_{H}$ to the remaining edges of $G$,
i.e. to the $v^*u_i$'s and the edges of the bad components of $G'$.

To that aim, we restrict ourselves to injectively using weights among $\{\alpha_{2m_{H}+1}, ... ,\alpha_{2m_G}\}$,
i.e. we do not use non-used weights among $\{\alpha_1,...,\alpha_{2m_H}\}$.
Let $u_1, ... , u_k$ denote the neighbours of $v^*$ belonging to good components of $G$. 
We start by injectively assigning weights to the edges $v^* u_1, ... , v^* u_k$ using $\Delta +k$ of the weights in $\{\alpha_{2m_G - (\Delta + k)+1}, ... , \alpha_{2m_G}\}$, without raising any sum conflict.
This is possible for every considered edge $v^{*} u_i$, since each $u_i$ has degree at most $\Delta - 1$ in $H$ and we have at least $\Delta +k -(i-1) \ge \Delta +1$ different available weights.

We are now left with weighting the edges of $G$ belonging to the bad components, or being incident to the bad components (i.e. being incident to $v^*$).
Assume there are $m'$ of them. Then we have $m_G=m_H+k+m'$, and, 
since $k+m'\ge \Delta$, we have 
$$2m_G - (\Delta + k) - 2m_H= k+2m'- \Delta \ge m' .$$ 
The set $\{\alpha_{2m_H+1}, ... , \alpha_{2m_G - (\Delta + k)}\}$ hence contains sufficiently many weights for weighting all of the $m'$ remaining edges. 
To that aim, we assign the weights $\alpha_{2m_G - (\Delta + k)}, ..., \alpha_{2m_H+1}$, following this order (i.e. in decreasing order of magnitude),  
to these $m'$ remaining edges as they are encountered during a breadth-first search algorithm performed from $v^*$.

It can easily be checked that, by the weighting scheme described above, 
the weights on the edges incident to $v^*$ are greater than all the weights on the edges incident to the neighbours of $v^*$.
Hence, by Observation~\ref{observation:situations}, vertex $v^*$ is distinguished from all its neighbours. 
By similar arguments, it can be checked that no sum conflicts can involve vertices of $G-H$, thus that the resulting edge-injective edge-weighting is neighbour-sum-distinguishing.
\end{proof}

\medskip


We now provide a second upper bound on $\chi^{e,1}_\Sigma(G)$ of the form $|E(G)|+k$
for every nice graph $G$.
Here, our $k$ is a small linear function of $\Delta(G)$,
making the bound 1) mostly interesting in the context of nice graphs with bounded maximum degree,
and 2) generally better than the bound in Theorem~\ref{theorem:2m}
(except in some cases to be discussed later).
The proof scheme we employ here
is different from the one used to prove Theorem~\ref{theorem:2m}.

\begin{theorem} \label{theorem:m+2D}
Let $G$ be a nice graph.
Then, for every set $W$ of $|E(G)|+2\Delta(G)$ distinct strictly positive weights,
there exists an edge-injective neighbour-sum-distinguishing $W$-edge-weighting of $G$.
In particular, we have $\chi_{\Sigma}^{e,1}(G) \leq |E(G)|+2\Delta(G)$.
\end{theorem}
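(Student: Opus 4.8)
The plan is to argue by induction on $n_G+m_G$, where $n_G:=|V(G)|$ and $m_G:=|E(G)|$, proving (as in Theorems~\ref{theorem:trees} and~\ref{theorem:2m}) the statement for an \emph{arbitrary} weight set of the prescribed size, so that disconnected and $K_2$-containing subgraphs can be fed back into the induction. Small cases and the case $\Delta(G)\le 2$ (Observation~\ref{observation:small-degree}) are immediate, so I assume $G$ connected with a fixed maximum-degree vertex $v^*$, $d_G(v^*)=\Delta:=\Delta(G)\ge 3$, and write $W=\{\alpha_1<\dots<\alpha_{m_G+2\Delta}\}$.

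The reduction I would use is different from the maximum-degree-vertex deletion of Theorem~\ref{theorem:2m}: I reserve the $2\Delta$ \emph{largest} weights of $W$, delete from $G$ a small substructure $S$ sitting at $v^*$, weight $G-S$ (after the routine repair of any $K_2$-components, as in the proof of Theorem~\ref{theorem:2m}) by induction using only the smaller weights, and re-insert $S$ with the reserved ones. Reserving the largest weights is what keeps re-insertion cheap: whenever an edge of $S$ is given a weight larger than all weights currently on the other edges at an endpoint of locally maximum degree, Observation~\ref{observation:situations}(3) forbids a conflict there, so $v^*$ is automatically separated from its neighbours and only the bounded families of conflicts counted in Observations~\ref{observation:extending-to-an-edge}, \ref{observation:cut2edges} and \ref{observation:remove-pendant} remain. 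Concretely, pendant paths of length $\ge 2$ hanging off $v^*$ are removed exactly as in Claim~\ref{claim:3chain} (delete all but the edge meeting $v^*$, recurse, re-extend with two reserved weights and Observation~\ref{observation:remove-pendant}); once these are gone, the natural choice for $S$ is a pair of adjacent edges $v^*u_1,v^*u_2$ with $d_G(u_1)\ge d_G(u_2)$, which by Observation~\ref{observation:cut2edges} can be re-inserted using at most $\mu\le d_G(v^*)+d_G(u_2)\le 2\Delta$ reserved weights.

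The step I expect to be the real obstacle is the accounting of reserved weights across the recursion. The difficulty is that the recursive call on $G-S$ again demands a full surplus $2\Delta(G-S)$, and $\Delta(G-S)$ need not drop below $\Delta$; a crude count then leaves only $|S|$ reserved weights for re-inserting $S$, far short of the $\mu$ that Observation~\ref{observation:cut2edges} can require. I see two ways to make the count close, and the proof presumably commits to one: (i) choose $S$ so that it meets \emph{every} vertex of degree $\Delta$ — then $\Delta(G-S)<\Delta$, and each unit by which the maximum degree drops releases two further reserved weights, which one verifies is enough once $u_1$ is taken of maximum degree; or (ii) strengthen the inductive statement so that the reservoir of large weights is threaded through all recursive calls and is provably never needed on edges away from the current maximum-degree vertex, so that the full $2\Delta$ are available at the top of each call. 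Either way, the remaining work is to verify that re-inserting $S$ with the reserved weights — via Observation~\ref{observation:cut2edges} for $v^*u_1,v^*u_2$ and Observation~\ref{observation:situations}(3) for the high-degree endpoints — produces an edge-injective neighbour-sum-distinguishing $W$-edge-weighting of $G$, completing the induction together with the base cases and the pendant-path reduction.
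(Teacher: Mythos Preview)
Your proposal has a genuine gap at exactly the point you flag as ``the real obstacle'', and neither of your suggested repairs closes it. For fix~(i), the substructure $S$ you describe consists of two edges $v^*u_1,v^*u_2$, so it touches at most three vertices; there is no reason the remaining vertices of degree~$\Delta$ should all lie among $\{v^*,u_1,u_2\}$, and in general $\Delta(G-S)=\Delta$. Even when $\Delta$ does drop by some $k\ge 1$, the number of reserved weights you recover is $|S|+2k=2+2k$, while Observation~\ref{observation:cut2edges} may demand up to $d_G(v^*)+d_G(u_2)$ weights, which is $2\Delta$ when $u_2$ has maximum degree; to make $2+2k\ge 2\Delta$ you would need $k\ge\Delta-1$, i.e.\ essentially to strip all edges from every maximum-degree vertex, which is no longer a ``small substructure'' and reintroduces the same accounting problem one level down. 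Fix~(ii) is not a proof sketch but a hope: you would need to state precisely what stronger invariant is being carried through the induction and why the reserved block never gets consumed away from the current maximum-degree vertex, and nothing in the proposal indicates how to do this.

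The paper's proof is not inductive at all and thereby sidesteps the whole accounting issue. It is a single direct construction: take a spanning tree $T$ of $G$ containing all edges incident to $v^*$, assign the $m-(n-1)$ smallest weights arbitrarily to the non-tree edges, and then weight the tree edges layer by layer from the leaves of $T$ toward $v^*$. Each vertex $u\neq v^*$ has a unique ``private'' tree edge toward $v^*$; when that edge is weighted, the sum $\sigma(u)$ becomes fixed, and one only needs to avoid the at most $\Delta-1$ already-fixed sums of neighbours of $u$. The tree edges not incident to $v^*$ are weighted from $\{\alpha_{m-(n-1)+1},\dots,\alpha_m\}$ (always at least $\Delta+1$ choices remain), and the $\Delta$ edges at $v^*$ are weighted last from the top block $\{\alpha_{m+1},\dots,\alpha_{m+2\Delta}\}$ (again at least $\Delta+1$ choices for each). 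Since every edge at $v^*$ then carries a weight larger than any other weight in the graph, Observation~\ref{observation:situations}(3) distinguishes $v^*$ from all its neighbours automatically. The key idea you are missing is this spanning-tree ordering, which makes every vertex's sum controllable by a single late edge and keeps the number of live constraints bounded by $\Delta-1$ throughout.
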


\begin{proof}
We may assume that $G$ is connected.
Set $\Delta:=\Delta(G)$, and let $n:=|V(G)|$ and $m:=|E(G)|$ denote the order and size, respectively, of $G$.
Also, set $W:=\{\alpha_1,...,\alpha_{m+2\Delta}\}$ where $\alpha_1 < ... < \alpha_{m+2\Delta}$.
First choose a vertex $v^*$ with degree~$\Delta$ in $G$,
and let $T$ be a spanning tree of $G$ including all edges incident to $v^*$.
From $T$, we deduce a partition $V_0 \cup  ...  \cup V_k$ of $V(G)$,
where each part $V_i$ includes the vertices of $G$ being at distance~$i$ from $v^*$ in $T$.
In particular, $V_0=\{v^*\}$,
and, for every vertex $u$ in a part $V_i$ with $i \neq 0$,
there is exactly one edge from $u$ to $V_{i-1}$ in $T$.
We call this edge the \textit{private edge} of $u$.

We now describe how to obtain an edge-injective neighbour-sum-distinguishing $W$-edge-weighting of $G$.
We start by assigning the edge weights $\alpha_1, ... ,\alpha_{m-(n-1)}$ to the edges of $E(G) \setminus E(T)$ in an arbitrary way.
This leaves us with all edges of $T$ to be weighted, 
which includes at least one incident (private) edge for every vertex different from $v^*$,
and all edges incident to $v^*$.
To weight these edges without creating any conflict,
we will first consider all vertices of $V_k$ and weight their private edges carefully,
then do the same for all vertices of $V_{k-1}$,
and so on layer by layer until all edges of $T$ are weighted.
Fixing any ordering over the vertices of $V_k, ... ,V_1$,
this weighting scheme yields an ordering $u_1, ... ,u_{n-1}$ in which the vertices are considered 
(i.e. the $|V_k|$ first $u_i$'s belong to $V_k$,
the $|V_{k-1}|$ next $u_i$'s belong to $V_{k-1}$, and so on; the $|V_1|$ last $u_i$'s belong to $V_1$).
We note that the private edges of the $|V_1|$ last $u_i$'s go to $v^*$.

To extend the edge-injective neighbour-sum-distinguishing edge-weighting to the edges of $T$ correctly,
we consider the $u_i$'s in order,
and for each of these vertices, we weight its private edge in such a way that no sum conflict arises.
Assume we are currently dealing with vertex $u_i$, meaning that all previous $u_i$'s have been correctly treated.
If $u_i \not \in V_1$, then we assign to the private edge of $u_i$ a non-used weight among $\{\alpha_{m-(n-1)+1}, ... ,\alpha_m\}$ in such a way that $\sigma(u_i)$ gets different from the sums of the at most $\Delta-1$ already treated neighbours of $u_i$.
Note that, even for the last $u_i$ not in $V_1$ to be considered,
the number of remaining non-used weights in $\{\alpha_{m-(n-1)+1}, ... ,\alpha_m\}$ is at least $\Delta+1$,
so this weighting extension can be applied to every vertex.

Now, if $u_i \in V_1$, then we apply the same strategy but with the weights among $\{\alpha_{m+1}, ... ,\alpha_{m+2\Delta}\}$.
Again, even for $u_{n-1}$, note that this set includes at least $\Delta+1$ non-used weights,
so we can correctly choose a weight for $u_{n-1}v^*$ so that $\sigma(u_{n-1})$ gets different from the sums of the previously-treated vertices.
To finish off the proof, we note that, by that strategy,
all edges incident to $v^*$ have been weighted with weights among $\{\alpha_{m+1}, ... ,\alpha_{m+2\Delta}\}$.
Since $d(v^*)=\Delta$, by Observation~\ref{observation:situations} we get that $\sigma(v^*)$ is eventually strictly bigger than the sums incident to its neighbours.
\end{proof}

\medskip

As a concluding remark,
we would like to point out that the $2 \cdot |E(G)|$ bound from Theorem~\ref{theorem:2m},
can, in several situations, be better than the $|E(G)|+2\Delta(G)$ bound from Theorem~\ref{theorem:m+2D}.
To be convinced of that statement, consider the class of graphs obtained by starting from any star with $\Delta$ leaves $u_1, ... ,u_\Delta$
and adding no more than $\Delta-1$ edges joining pairs of vertices among $\{u_1, ... ,u_\Delta\}$.


\section{Refined bounds for particular classes of sparse graphs} \label{section:bound-classes}

We now improve the bounds in Section~\ref{section:upper-bound} to bounds of the form $|E(G)|+k$,
where $k$ is a small constant,
for several classes of nice graphs $G$.
Our weighting strategy here relies on removing some edges from~$G$,
then deducing a correct edge-weighting of the remaining graph,
and extending that weighting to $G$.
So that this weighting strategy applies,
we focus on rather sparse graph classes with particular properties inherited by their subgraphs.
In that respect, we give a special focus to nice $2$-degenerate graphs, and nice graphs with maximum average degree at most~$3$.
It is worth recalling that these graphs may have arbitrarily large maximum degree,
so Theorem~\ref{theorem:m+2D} does not provide the kind of bound we are here interested in.

Throughout this section,
when speaking of a \textit{$k$-vertex},
we mean a degree-$k$ vertex.
By a \textit{$k^-$-vertex} (resp. \textit{$k^+$-vertex}),
we refer to a vertex with degree at most (resp. at least) $k$.


\subsection{$2$-degenerate graphs}

A graph $G$ is said to be \textit{$k$-degenerate} if every subgraph of $G$ has a $k^-$-vertex.
In the next result, we focus on nice $2$-degenerate graphs, 
and exhibit an upper bound on their value of $\chi^{e,1}_\Sigma$.

\begin{theorem} \label{theorem:2deg}
Let $G$ be a nice $2$-degenerate graph.
Then, for every set $W$ of $|E(G)|+4$ distinct strictly positive weights,
there exists an edge-injective neighbour-sum-distinguishing $W$-edge-weighting of $G$.
In particular, we have $\chi_{\Sigma}^{e,1}(G) \leq |E(G)|+4$.
\end{theorem}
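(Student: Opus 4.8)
The plan is to argue by induction on $n_G + m_G$, as in the proofs of Theorems~\ref{theorem:trees} and~\ref{theorem:2m}, proving the stronger $W$-edge-weighting statement so that disconnected graphs cause no trouble. Small cases and the case $\Delta(G) \leq 2$ are handled by Observation~\ref{observation:small-degree}, so I would assume $G$ is connected with $\Delta(G) \geq 3$ and $m_G$ large. The key structural input is $2$-degeneracy: every subgraph has a vertex of degree at most~$2$. This is preserved under taking subgraphs, so every graph I produce by deleting vertices or edges is again $2$-degenerate, and I may freely reapply induction. The idea is to locate a bounded-size configuration near a minimum-degree vertex, delete a constant number of its edges (at most~$4$), weight the rest by induction using all but the $4$ largest weights of $W$, and then extend using those (at most) $4$ reserved large weights via the extension lemmas of Section~\ref{section:prelim}.

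More concretely, I would pick a $2^-$-vertex $v$ of $G$. If $d_G(v) = 1$, let $u$ be its neighbour; if $d_G(u) \leq 5$ I can delete $vu$, apply induction to $G' = G - \{vu\}$ with the $|E(G')|+4$ smaller weights, and extend by Observation~\ref{observation:remove-pendant} since one of the $\geq 5$ reserved large weights avoids the $\leq d_G(u)-1 \leq 4$ forbidden sums; if $d_G(u)$ is large, I would instead delete $u$ entirely (so all of $u$'s edges, but its non-$v$ neighbours have their degrees drop) — this is exactly the trick in Theorem~\ref{theorem:2m}, and I'd need to check it still fits within $4$ extra weights, using that in a $2$-degenerate graph one can iterate to peel off a whole "pendant tree" hanging off $u$. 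The cleaner route is the $d_G(v)=2$ case: let $u_1, u_2$ be the two neighbours of $v$. Here I would apply Observation~\ref{observation:cut2edges} to $G' = G - \{vu_1, vu_2\}$, which is $2$-degenerate hence admits (by induction) an edge-injective neighbour-sum-distinguishing weighting with $|E(G')| + 4 = m_G + 2$ weights, leaving $\geq 2$ large reserved weights for $vu_1, vu_2$. The number $\mu$ from Observation~\ref{observation:cut2edges} is $(d_G(u_1)+1) + \max\{0, d_G(v) + d_G(u_2) - d_G(u_1) - 1\}$ with $d_G(v) = 2$; when $d_G(u_1) \approx d_G(u_2)$ this is roughly $d_G(u_1)+2$, which is far more than the $\leq 4+2$ weights available — so the naive single-deletion does not suffice, and this is the crux.

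The main obstacle, therefore, is that deleting only a constant number of edges near a $2$-vertex is not enough when its neighbours have large degree: the extension lemmas demand $\Theta(\Delta)$ spare weights. The resolution I'd pursue is the one used implicitly in the antimagic literature (Cranston–Liang–Zhu, B\'erci–Bern\'ath–Vizer, cited in the excerpt): rather than reserving the largest weights for the last edges, reserve them for the edges incident to the high-degree endpoint so that Observation~\ref{observation:situations}(3) resolves those conflicts automatically, and only genuinely "spend" the constant budget on the low-degree side. Precisely, I would do a discharging-style case analysis on the local configuration at a $2^-$-vertex $v$: in each case, identify at most $4$ edges to delete, then reweight by first placing, on the edges we re-add that touch the large-degree vertex $w$, weights that are forced to be large because $w$ also keeps many of its original (inductively assigned) edge-weights — but this requires care, since those inductive weights are the \emph{small} ones. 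The honest fix is to delete a slightly larger local piece: find a $2$-vertex $v$ whose two neighbours, after removing $v$ and a short path of $2$-vertices, leave a graph where the re-insertion only forbids a bounded number of sums; $2$-degeneracy guarantees such a "reducible configuration" of bounded complexity exists. I expect the bulk of the proof to be this configuration enumeration, with the final extension in each case being a routine application of Observations~\ref{observation:situations}, \ref{observation:extending-to-an-edge}, \ref{observation:cut2edges} and~\ref{observation:remove-pendant}, verifying each time that $4$ extra weights suffice.
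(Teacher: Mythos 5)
Your setup (induction on $n_G+m_G$ with the stronger ``any set $W$'' statement, reduction to $\Delta(G)\geq 3$, and the use of Observations~\ref{observation:cut2edges} and~\ref{observation:remove-pendant} to re-insert a constant number of deleted edges) matches the paper's framework, and you correctly isolate the crux: deleting one or two edges at a $2^-$-vertex fails when its neighbours have large degree, because the extension lemmas then demand $\Theta(\Delta)$ spare weights. But at exactly that point the proposal stops being a proof. The claim that ``$2$-degeneracy guarantees such a reducible configuration of bounded complexity exists'' is precisely what has to be proved, and the specific configuration you suggest (a $2$-vertex together with a short path of $2$-vertices) does not resolve the hard case: if a $1$- or $2$-vertex has all its neighbours of large degree, removing it and nearby $2$-vertices does not lower those neighbours' degrees, so re-insertion still forbids an unbounded number of sums and no constant reserve of weights suffices. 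Your alternative idea for pendant edges (delete the high-degree neighbour $u$ entirely, as in Theorem~\ref{theorem:2m}) is likewise left unchecked and would require re-weighting $d_G(u)$ edges, far beyond the budget of $4$.

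The missing idea in the paper's proof is to apply $2$-degeneracy not to $G$ but to $G_1:=G-S_1$, where $S_1$ is the set of $2^-$-vertices of $G$ (after first showing, via Observations~\ref{observation:extending-to-an-edge} and~\ref{observation:remove-pendant}, that a minimal counterexample has no edge $uv$ with $d_G(u)+d_G(v)\leq 6$). A $2^-$-vertex $v$ of $G_1$ may have large degree in $G$, but it has at most two neighbours outside $S_1$; all its other neighbours are $2^-$-vertices, and edges from $v$ to them can be deleted and re-weighted with the largest reserved weights, so that by Observation~\ref{observation:situations}(3) the conflicts between $v$ and these low-degree neighbours disappear automatically and only a bounded number of conflicts (at most the two $G_1$-neighbours plus the endpoints' own neighbours) remain. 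The proof then splits on the number $d^+(v)$ of $S_1$-neighbours of $v$: if $d^+(v)\geq 3$ one deletes three such edges, if $d^+(v)=2$ then $d_G(v)\leq 4$ and Observation~\ref{observation:cut2edges} applies with $\mu\leq 6$, and if every such $v$ has $d^+(v)=1$ (hence is a $3$-vertex) a second peeling step is needed: set $S_2$ equal to the $2^-$-vertices of $G_1$, pick a $2^-$-vertex $v^*$ of $G_2:=G-(S_1\cup S_2)$ (or of $G_1$ if $G_2$ is empty), and argue on the number of its neighbours in $S_1\cup S_2$, deleting four edges when that number is at least~$4$ and otherwise using that $d_G(v^*)\leq 5$ together with Observation~\ref{observation:cut2edges}. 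None of this two-level (indeed three-level) peeling, nor the accompanying case analysis that certifies the budget of $4$ extra weights in every branch, appears in your proposal; without it the argument is a plan rather than a proof.
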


\begin{proof}
Assume the claim is false, and let $G$ be a counterexample that is minimal in terms of $n_G+m_G$,
where $n_G:=|V(G)|$ and $m_G:=|E(G)|$.
Set $W:=\{\alpha_1,...,\alpha_{m_G+4}\}$.
We show below that $G$ cannot be a counterexample, and thereby get a contradiction. 
This is done by showing that we can always remove some edges from~$G$
while keeping the graph nice,
then deduce an edge-injective neighbour-sum-distinguishing $\{\alpha_1,...,\alpha_{m_{G'}+4}\}$-edge-weighting $w_{G'}$ of the remaining graph $G'$,
where $m_{G'}:=|E(G')|$,
and finally extend $w_{G'}$ to get an edge-injective neighbour-sum-distinguishing $W$-edge-weighting $w_G$ of $G$.

We start by pointing out properties of $G$ we may assume.
Clearly, we may suppose that $G$ is connected.
According to Observation~\ref{observation:small-degree}, we may also assume that $\Delta(G) \geq 3$,
and, therefore, that $m_G\geq 4$, as otherwise $G$ would be a tree, 
in which case a weighting exists according to Theorem~\ref{theorem:trees}.
We note as well that the $1$-vertices of $G$ must be adjacent to vertices with sufficiently large degree.


\begin{claim} \label{claim:2deg-1vertex}
Every $1$-vertex of $G$ is adjacent to a $6^+$-vertex.
\end{claim}

\begin{proof}
Assume for contradiction that $G$ has a $1$-vertex $u$ adjacent to a $5^-$-vertex $v$.
Let $G':=G-\{uv\}$. 
Then $G'$ is $2$-degenerate, and nice as otherwise $G$ would be a path of length $2$ (in which case Theorem~\ref{theorem:trees} applies).
Thus $G'$ admits an edge-injective neighbour-sum-distinguishing $\{\alpha_1,...,\alpha_{m_{G'}+4}\}$-edge-weighting $w_{G'}$,
where $m_{G'}:=m_G-1$.
According to Observation~\ref{observation:remove-pendant},
we can correctly extend $w_{G'}$ to $uv$, hence to $G$, since we have at least five distinct weights available for that.
This is a contradiction.
\end{proof}

\medskip

From Claim~\ref{claim:2deg-1vertex}, we also deduce the following as a corollary.

\begin{claim} \label{claim:edge-nice}
$G-\{uv\}$ is nice for every edge $uv$.
\end{claim}

\begin{proof}
Let $uv$ be an edge of $G$, and set $G':=G-\{uv\}$. If $d_G(u) \geq 3$ and $d_G(v) \geq 3$, then $G'$ is clearly nice. 
Furthermore, if $d_G(u)=1$ or $d_G(v)=1$, then $G'$ is nice by Claim~\ref{claim:2deg-1vertex}.

Now assume that at least one of $u$ and $v$ has degree~$2$ in $G$.
Without loss of generality, assume that $d_G(u)=2$, and let $u'$ be the neighbour of $u$ different from $v$. 
By Claim~\ref{claim:2deg-1vertex} we have $d_G(v) \geq 2$ and $d_G(u') \geq 2$. 
If $d_G(v) \geq 3$, then clearly $G'$ is nice.
So assume $d_G(v)=2$, and let $v'$ be the neighbour of $v$ different from $u$.
Then, again by Claim~\ref{claim:2deg-1vertex}, we have $d_G(v') \geq 2$, and $G'$ is nice.
\end{proof}

\medskip

As a consequence of Claim~\ref{claim:edge-nice} and Observation~\ref{observation:extending-to-an-edge}, 
we immediately get the following.

\begin{claim} \label{claim:edge-sumdegree6}
$G$ has no edge $uv$ with $d_G(u)+d_G(v)\le 6$. 
\end{claim}

\medskip

We are now ready to start off the proof.
Let $S_1$ denote the set of $2^-$-vertices of $G$, and set $G_1:=G-S_1$.
Since $\Delta(G) \geq 3$, graph $G_1$ has vertices.
In particular, since $G_1$ is $2$-degenerate, it has a $2^-$-vertex $v$.
Let us denote as $d^+(v)$ the number of neighbours, in $G$, of $v$ in $S_1$.
Then $d_G(v)=d^+(v)+d_{G_1}(v)$.

First assume that $d^+(v) \geq 3$, and let $v_1,v_2,v_3$ be three neighbours of $v$ in $S_1$. 
We here consider $G':=G-\{vv_1,vv_2,vv_3\}$.
Note that $G'$ has to be nice,
as otherwise $G$ would have an edge violating Claim~\ref{claim:edge-sumdegree6}.
Due to the minimality of $G$, and because $G'$ is a nice $2$-degenerate graph,
there exists an edge-injective neighbour-sum-distinguishing $\{\alpha_1,...,\alpha_{m_{G'}+4}\}$-edge-weighting $w_{G'}$ of $G'$. 
We extend $w_{G'}$ to $vv_1, vv_2, vv_3$, thus to $G$, 
assigning weights among a set of seven weights including those among $\{\alpha_{m_G+2},\alpha_{m_G+3},\alpha_{m_G+4}\}$
in the following way.

We first assign a weight $\beta_1$ from $\{\alpha_{m_G+3},\alpha_{m_G+4}\}$ to the edge $vv_1$
so that we do not create a sum conflict involving $v_1$ and its neighbour different from $v$ (if any),
which is clearly possible with two distinct weights.
Similarly, we then assign a weight $\beta_2$ from $\{\alpha_{m_G+2},\alpha_{m_G+3},\alpha_{m_G+4}\} \setminus \{\beta_1\}$ to $vv_2$
so that we do not create a sum conflict involving $v_2$ and its neighbour different from $v$ (if any).
Note that due to the choice of $\beta_1$ and $\beta_2$,
which are strictly bigger than the weights among $\{\alpha_1, ... ,\alpha_{m_{G'}+4}\}$,
no matter how we extend the weighting to $vv_3$
it cannot occur that $\sigma_{w_G}(v)$ gets equal to the sum of weights incident to a $2^-$-vertex neighbouring $v$.
Hence, when extending $w_{G'}$ to $vv_3$,
we just have to make sure that $\sigma_{w_G}(v_3)$ does not get equal to the sum of weights incident to the neighbour of $v_3$ different from $v$ (if any),
and that $\sigma_{w_G}(v)$ does not get equal to the sums of weights incident to its at most two neighbours in $G_1$.
So there are at most three conflicts to take into account while we have five weights in hand to weight $vv_3$.
Clearly, this is sufficient to extend the weighting.

\medskip 

Assume now that $d^+(v)=2$ and let $v_1,v_2 \in S_1$ denote the two neighbours of $v$ with degree at most~$2$ in $S_1$. 
We here consider $G':=G-\{vv_1,vv_2\}$, which is 2-degenerate, and nice by Claim~\ref{claim:edge-sumdegree6},
and hence admits an edge-injective neighbour-sum-distinguishing $\{\alpha_1,...,\alpha_{m_{G'}+4}\}$-edge-weighting $w_{G'}$,
where $m_{G'}:=m_G-2$.
Recall that $d_G(v_1),d_G(v_2) \leq 2$,
and that $d_G(v) \leq 4$.
According to Observation~\ref{observation:cut2edges},
we can correctly extend $w_{G'}$ to $vv_1$ and $vv_2$ provided we have at least six distinct weights in hand.
Since this is precisely the case here, an extension of $w_{G'}$ to $G$ exists.

\medskip

The last case to consider is when $d^+(v)=1$, which we cannot directly treat using similar arguments as above.
We may however assume that all $2^-$-vertices $v$ of $G_1$ verify $d^+(v)=1$
as otherwise one of the previous situations would apply.
Furthermore, these vertices have degree exactly~$3$, i.e. they each have exactly two neighbours in $G_1$,
as otherwise they would belong to $S_1$.
Now let $S_2$ denote the set of all $2^-$-vertices of $G_1$
and set $G_2:=G-\{S_1,S_2\}$.
We fix a vertex $v^*$ for the rest of the proof, chosen as follows.
If $G_2$ has vertices, then we choose, as $v^*$,
a vertex of $G_2$ verifying $d_{G_2}(v^*) \leq 2$ (which exists, as $G_2$ is $2$-degenerated).
Otherwise, we choose as $v^*$ one vertex verifying $0<d_{G_1}(v^*) \leq 2$.
In the latter case, note that $v^*$ belongs to $S_2$.

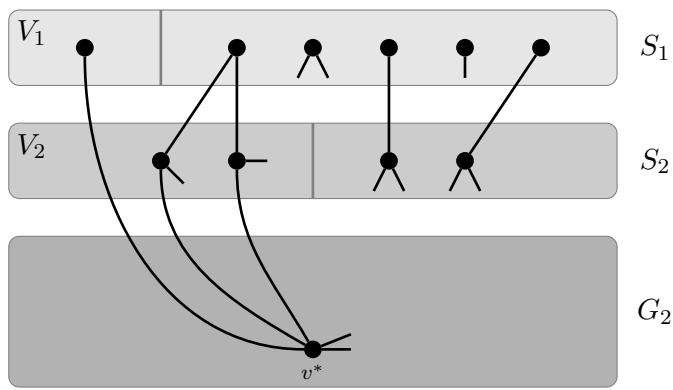
\begin{figure}[t]
	\center 
	\begin{tikzpicture}[inner sep=0.7mm]
		\draw [rounded corners,color=gray,fill=gray!20] (0,0)--(8,0)--(8,-1) -- (0,-1) -- cycle;	
		\draw[-,line width=1pt,gray] (2,0) -- (2,-1);
		\node at (8.5,-0.5) {$S_1$};
		\node at (0.3,-0.3) {$V_1$};
		
		\node[draw,circle,line width=1pt, fill=black] (u1) at (1,-0.5){};	
		\node[draw,circle,line width=1pt, fill=black] (u2) at (3,-0.5){};	
		\node[draw,circle,line width=1pt, fill=black] (u3) at (4,-0.5){};	
		\node[draw,circle,line width=1pt, fill=black] (u4) at (5,-0.5){};	
		\node[draw,circle,line width=1pt, fill=black] (u5) at (6,-0.5){};	
		\node[draw,circle,line width=1pt, fill=black] (u6) at (7,-0.5){};	
		
		\draw [rounded corners,color=gray,fill=gray!40] (0,-1.5)--(8,-1.5)--(8,-2.5) -- (0,-2.5) -- cycle;	
		\draw[-,line width=1pt,gray] (4,-1.5) -- (4,-2.5);
		\node at (8.5,-2) {$S_2$};
		\node at (0.3,-1.8) {$V_2$};
		
		\node[draw,circle,line width=1pt, fill=black] (v1) at (2,-2){};	
		\node[draw,circle,line width=1pt, fill=black] (v2) at (3,-2){};	
		\node[draw,circle,line width=1pt, fill=black] (v3) at (5,-2){};	
		\node[draw,circle,line width=1pt, fill=black] (v4) at (6,-2){};	
		
		\draw [rounded corners,color=gray,fill=gray!60] (0,-3) -- (8,-3) -- (8,-5) -- (0,-5) -- cycle;	
		\node at (8.5,-4) {$G_2$};
		
		\node[draw,circle,line width=1pt, fill=black] (w) at (4,-4.5)[label=below:\scriptsize $v^*$]{};	
		\draw[-,line width=1pt] (w) -- (4.5,-4.5);
		\draw[-,line width=1pt] (w) -- (4.5,-4.3);
		\draw[-,line width=1pt] (w) to[out=120,in=-90] (v2);
		\draw[-,line width=1pt] (w) to[out=150,in=-90] (v1);
		\draw[-,line width=1pt] (w) to[out=180,in=-90] (u1);
		
		\draw[-,line width=1pt] (v1) -- (2.3,-2.3);
		\draw[-,line width=1pt] (v1) -- (u2);
		\draw[-,line width=1pt] (v2) -- (3.4,-2);
		\draw[-,line width=1pt] (v2) -- (u2);
		\draw[-,line width=1pt] (v3) -- (u4);
		\draw[-,line width=1pt] (v3) -- (4.8,-2.4);
		\draw[-,line width=1pt] (v3) -- (5.2,-2.4);
		\draw[-,line width=1pt] (v4) -- (u6);
		\draw[-,line width=1pt] (v4) -- (5.8,-2.4);
		\draw[-,line width=1pt] (v4) -- (6.2,-2.4);
		
		\draw[-,line width=1pt] (u3) -- (3.8,-0.9);
		\draw[-,line width=1pt] (u3) -- (4.2,-0.9);
		\draw[-,line width=1pt] (u5) -- (6,-0.9);
	\end{tikzpicture}

\caption{Illustration of the sets $V_1$ and $V_2$ introduced in the proof of Theorem~\ref{theorem:2deg}.}
\label{figure:deg}
\end{figure}

Now, consider the following sets (see Figure~\ref{figure:deg} for an illustration)
\begin{center}
$V_1 :=\left\{v \in V(G) ~|~v \in S_1 \cap N_G(v^*)\right\}$ \hspace{10pt} and \hspace{10pt} $V_2=\{v \in V(G)~|~v \in S_2 \cap N_G(v^*)\}$,
\end{center}

\noindent and set $d_1^+:=|V_1|$ and $d_2^+:=|V_2|$.
Due to our choice of $v^*$,
we have $d_2^+ \geq 1$.
Furthermore, all vertices in $V_2$ are $3$-vertices adjacent to $v^*$ and to a $2^-$-vertex in $S_1$,
while all vertices in $V_1$ are $2^-$-vertices adjacent to $v^*$.
Also, we have $ d_1^++d_2^+ \leq d_G(v^*) \leq d_1^++d_2^++2$.

First assume that $d_1^++d_2^+ \geq 4$, and let $v_1,v_2,v_3,v_4$ be any four distinct neighbours of $v^*$ in $V_1 \cup V_2$. 
We here set $G'=G-\{v^*v_1,v^*v_2,v^*v_3,v^*v_4\}$.
Since the $v_i$'s are $3^-$-vertices in $G$,
it should be clear, according to Claim~\ref{claim:edge-sumdegree6},
that $G'$ is nice.
As it is also $2$-degenerated, by minimality of $G$
there exists an edge-injective neighbour-sum-distinguishing $\{\alpha_1,...,\alpha_{m_{G'}+4}\}$-edge-weighting $w_{G'}$,
where $m_{G'}:=m_G-4$.

We now have to prove that we can extend $w_{G'}$ to $w_G$
using at most eight distinct weights
including those among $\{\alpha_{m_G+1},\alpha_{m_G+2},\alpha_{m_G+3},\alpha_{m_G+4}\}$.
Since $d^+_2 \geq 1$, some of the $v_i$'s belong to $V_2$;
assume $v_1$ is one such vertex.	
We first assign a weight $\beta_1$ to $v^*v_1$ from the set $\{\alpha_{m_G+2},\alpha_{m_G+3},\alpha_{m_G+4}\}$
so that no conflict involving $v_1$ and one of its two neighbours different from $v^*$ arises.
This is clearly possible with at least three distinct weights.
Similarly, we assign two weights $\beta_2$ and $\beta_3$ from the set $\{\alpha_{m_G+1},\alpha_{m_G+2},\alpha_{m_G+3},\alpha_{m_G+4}\} \setminus \{\beta_1\}$ to $v^*v_2$ and $v^*v_3$, respectively, 
so that no conflict involving $v_2$ or $v_3$ and one of their at most two neighbours different from $v ^*$ arises.
We note that this is possible since, though $v_2$ and $v_3$ might be $3$-vertices,
they are adjacent to a $2$-vertex in that case.
Under the assumption that we assign a weight among $\{\alpha_{m_G+1},\alpha_{m_G+2},\alpha_{m_G+3},\alpha_{m_G+4}\}$ to $v^*v_2$ and $v^*v_3$,
we cannot create any sum conflict involving $v_2$ or $v_3$ and a neighbouring $2$-vertex.
In other words, only one conflict involving $v_2$ or $v_3$ may arise here.

We finally have to extend $w_{G'}$ to $v^*v_4$.
Note that due to the choice of $\beta_1,\beta_2,\beta_3$,
and because $v_4$ is a $3^-$-vertex in $G$,
it cannot be that, currently, the sum of weights incident to $v^*$ is exactly the sum of weighs incident to $v_4$.
Furthermore, for the same reasons,
no matter how we weight $v^*v_4$
it cannot happen that, eventually, $\sigma_{w_G}(v^*)$ gets equal to the sum of weights incident to any vertex in $V_1 \cup V_2$.
Hence, when weighting $v^*v_4$, we just have to make sure that $\sigma_{w_G}(v^*)$ does not get equal to the sums of weights incident to the at most two other neighbours of $v^*$ (i.e. those not in $V_1 \cup V_2$, unless $G_2$ is empty in which case all neighbours of $v^*$ belong to $V_1 \cup V_2$),
and that $\sigma_{w_G}(v_4)$ does not get equal to the sums of weights incident to the at most two neighbours of $v_4$ different from $v^*$.
Since we have five distinct weights left to weight $v^*v_4$,
necessarily one of these weights respect these conditions.
The claimed extension of $w_{G'}$ hence exists.

\medskip

To complete the proof, we have to consider the cases where $d_1^++d_2^+ \leq 3$.
Denote by $v_1$ one neighbour of $v^*$ in $V_2$,
which exists since $d_2^+ \geq 1$.
Since $v_1$ belongs to $V_2$, we know that $v_1$ is a $3$-vertex adjacent to a $2$-vertex, say $u_1$, in $S_1$.
Set $G':=G-\{v^*v_1,v_1u_1\}$. Again, $G'$ is $2$-degenerate and nice by Claim~\ref{claim:edge-sumdegree6}.
So let $w_{G'}$ be an edge-injective neighbour-sum-distinguishing $\{\alpha_1,...,\alpha_{m_{G'}+4}\}$-edge-weighting of $G'$,
which exists due to the minimality of $G$,
where $m_{G'}:=m_G-2$.
For contradiction, we show that $w_{G'}$ can be extended to $G$ and that we can do it with six distinct weights.

The degree properties here are that $d_G(v^*) \leq 5$, $d_G(v_1)=3$ and $d_G(u_1) = 2$.
It can be observed, under those assumptions, that the quantity
$$\mu:=\left(d_G(v^*)+1\right)+\max\left\{0,d_G(v_1)+d_G(u_1)-d_G(v^*)-1\right\}$$
is bounded above by~$6$.
From Observation~\ref{observation:cut2edges},
we hence know that $w_{G'}$ can be extended to $v^*v_1$ and $v_1u_1$, as claimed.
This completes the proof.
\end{proof}


\subsection{Graphs with maximum average degree at most~$3$}

We recall that, for any given graph $G$,
the \textit{maximum average degree} of $G$, denoted $\mad(G)$, 
is defined as the maximum average degree of a subgraph of $G$. 
That is
\begin{center}
$\mad(G) := \max \left\{\frac{2\cdot |E(H)|}{|V(H)|} : H {\rm ~is~a~non\mymathhyphen empty~subgraph~of~} G \right\}$.
\end{center}

In the next result, we prove an upper bound on $\chi^{e,1}_\Sigma$ for every nice graph with maximum average degree at most~$3$.

\begin{theorem} \label{theorem:mad}
Let $G$ be a nice graph with $\mad(G) \leq 3$.
Then, for every set $W$ of $|E(G)|+6$ distinct strictly positive weights,
there exists an edge-injective neighbour-sum-distinguishing $W$-edge-weighting of $G$.
In particular, we have $\chi_{\Sigma}^{e,1}(G) \leq |E(G)|+6$.
\end{theorem}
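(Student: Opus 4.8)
The plan is to follow the template of the proof of Theorem~\ref{theorem:2deg}: argue by contradiction, take a counterexample $G$ minimising $n_G+m_G$, fix a bad weight set $W=\{\alpha_1<\dots<\alpha_{m_G+6}\}$, and rule out one local configuration after another until $G$ cannot exist. The bookkeeping is the one used throughout Section~\ref{section:bound-classes}: if we delete a set $F$ of edges and $G-F$ stays nice, then by minimality $G-F$ has an edge-injective neighbour-sum-distinguishing $\{\alpha_1,\dots,\alpha_{m_G-|F|+6}\}$-edge-weighting; since this uses only $m_G-|F|$ of those $m_G-|F|+6$ values, six of them remain free, and together with $\alpha_{m_G-|F|+7},\dots,\alpha_{m_G+6}$ we get $|F|+6$ distinct weights to colour the edges of $F$. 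One may assume, as usual, that $G$ is connected, that $\Delta(G)\ge 3$ (Observation~\ref{observation:small-degree}), and that $G$ is not a tree (Theorem~\ref{theorem:trees}), so $m_G\ge 4$; note also that no $d$-regular graph can be a counterexample, since for $d\le 3$ every edge has endpoint-degree sum at most $6$ (reducible below) and for $d\ge 4$ one has $\mad(G)=d>3$.

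First I would collect the basic reductions, exact analogues of Claims~\ref{claim:2deg-1vertex}--\ref{claim:edge-sumdegree6}. Deleting a pendant edge and applying Observation~\ref{observation:remove-pendant} with the $7$ available weights shows every $1$-vertex is adjacent to an $8^+$-vertex; a short case check as in Claim~\ref{claim:edge-nice} then gives that $G-\{uv\}$ is nice for every edge $uv$; and Observation~\ref{observation:extending-to-an-edge} with $7$ weights (after arranging that a weighting of $G-\{uv\}$ with $\sigma(u)\neq\sigma(v)$ may be assumed, which is automatic once one of $u,v$ drops to degree $2$ by Observation~\ref{observation:situations}) shows $G$ has no edge $uv$ with $d_G(u)+d_G(v)\le 8$. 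From this one reads off all the local degree constraints: $2$-vertices have both neighbours of degree $\ge 7$, $3$-vertices all neighbours of degree $\ge 6$, $4$-vertices all neighbours of degree $\ge 5$, and so on, up to $7$-vertices whose neighbours have degree $\ge 2$. A further family of reductions — deleting several pendant edges at a common vertex, or deleting two adjacent edges and invoking Observation~\ref{observation:cut2edges}, or deleting up to four edges around a single low-degree vertex exactly as in the last cases of the proof of Theorem~\ref{theorem:2deg} — rules out a vertex carrying ``too many'' degree-$1$ and degree-$2$ neighbours; here the slack of $|E(G)|+6$ rather than $|E(G)|+4$ is what makes the extensions fit, the point being that a $\mad(G)\le 3$ graph need only be $3$-degenerate, so a peeled vertex may have degree up to $3$ and one extra edge must be absorbed.

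With the configuration list in hand I would close by discharging. Assign to each vertex $v$ the charge $\mu(v)=d_G(v)-3$, so that $\sum_{v\in V(G)}\mu(v)=2m_G-3n_G\le 0$ by $\mad(G)\le 3$. Redistribute with rules such as: every $8^+$-vertex sends $2$ to each pendant neighbour and every $7^+$-vertex sends $\tfrac12$ to each degree-$2$ neighbour, plus tailored rules for vertices adjacent both to a pendant vertex and to degree-$2$ vertices. Using the reductions one then checks that each vertex ends with non-negative charge — $3$-vertices and poor vertices reach exactly $0$, $4$-, $5$-, $6$- and $7$-vertices have no ``expensive'' neighbour and stay strictly positive — while at least one vertex ends strictly positive (one shows that the degree sequence forcing every vertex to finish at $0$ is itself excluded by the reductions), contradicting $\sum_v\mu(v)\le 0$.

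The hard part will be precisely this last step, i.e. finding a configuration list rich enough that the discharging balances. The naive rules above leave an $8^+$-vertex that simultaneously hosts a pendant neighbour and up to seven degree-$2$ neighbours with negative final charge, so one needs a few additional, somewhat delicate multi-edge reductions ruling out such vertices, and for each of them one must verify both that the hypotheses of Observations~\ref{observation:extending-to-an-edge}--\ref{observation:cut2edges} (in particular $\sigma(u)\neq\sigma(v)$ before extending an edge, and niceness after deletion via the analogue of Claim~\ref{claim:edge-nice}) can always be met, and that the extension uses no more than the available pool of $|F|+6$ weights. Striking that balance — configurations numerous enough, weights frugal enough — is where the genuine work lies.
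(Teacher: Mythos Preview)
Your plan has the right scaffolding (minimal counterexample, local reductions, discharging) but, as you yourself concede, the discharging does not close: an $8^+$-vertex that hosts both a pendant neighbour and several $2$-neighbours goes negative, and you offer no concrete reductions to exclude it. The paper sidesteps this entirely by \emph{not} discharging on $G$. Instead it sets $H:=G$ minus all $1$-vertices, proves $\delta(H)\ge 2$ and a short list of structural claims about $H$ (no adjacent $2$-vertices in $H$, no $2$-vertex of $H$ with two $3$-neighbours, a $3$-vertex of $H$ has at most one $3^-$-neighbour, a $4$- or $5$-vertex at most one $3^-$-neighbour, a $6$-vertex at most one $2$-neighbour --- each obtained by deleting two adjacent edges and invoking Observation~\ref{observation:cut2edges}), and then discharges on $H$ with initial charge $d_H(v)-3$ and three simple rules to force $\Delta(H)\le 3$. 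Once $\Delta(H)\le 3$, Theorem~\ref{theorem:m+2D} yields an edge-injective neighbour-sum-distinguishing weighting of $H$ from any $|E(H)|+6\ge|E(H)|+2\Delta(H)$ weights, and this is extended to the pendant edges of $G$ one host vertex at a time (at most three conflicts per vertex, at least seven spare weights). Stripping the $1$-vertices before discharging, together with the black-box use of Theorem~\ref{theorem:m+2D} on the core, is the idea you are missing; it is precisely what makes the awkward ``$8^+$-vertex with a pendant neighbour'' configuration disappear.

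One further remark: your blanket reduction ``$G$ has no edge $uv$ with $d_G(u)+d_G(v)\le 8$'' via Observation~\ref{observation:extending-to-an-edge} is not justified as written. That observation requires $\sigma(u)\neq\sigma(v)$ in the weighting of $G-\{uv\}$, but $u$ and $v$ are not adjacent there, so the inductive weighting gives no such guarantee; your appeal to Observation~\ref{observation:situations} does not help either, since it too speaks only of adjacent vertices. The paper avoids this trap by always deleting \emph{two} adjacent edges and using Observation~\ref{observation:cut2edges}, which carries no such precondition, and correspondingly its structural claims on $H$ are milder than the ones you derive (e.g.\ a $3$-vertex of $H$ may well have a $3$-neighbour, just not two of them).
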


\begin{proof}
Assume there exists a counterexample to the claim, 
that is, 
there exists a nice graph $G$ for which we have $\mad(G) \leq 3$ but, for a particular set $W$ including $|E(G)|+6$ weights,
there is no edge-injective neighbour-sum-distinguishing $W$-edge-weighting of $G$.
We consider $G$ minimum in terms of $n_G+m_G$,
where $n_G:=|V(G)|$ and $m_G:=|E(G)|$.
Set $W:=\{\alpha_1,...,\alpha_{m_G+6}\}$, where $\alpha_1 <... <\alpha_{m_G+6}$.
Our ultimate goal in this proof is to show that $G$ cannot exist.
The strategy we employ to this end
is essentially to show that $G$ has a nice subgraph~$H$, with order $n_H$ and size $m_H$, such that 
$H$ has an edge-injective neighbour-sum-distinguishing $\{\alpha_1,...,\alpha_{m_H+6}\}$-edge-weighting $w_H$ that can be extended to an edge-injective neighbour-sum-distinguishing $W$-edge-weighting $w_G$ of $G$,
contradicting the fact that $G$ is a counterexample.
The main tool we want to use, in order to show that $H$ has such an edge-weighting, is Theorem~\ref{theorem:m+2D}.
Since $G$ is a counterexample to the claim, note that Theorem~\ref{theorem:m+2D} already implies that $\Delta(G) \geq 4$.
Furthermore, we may assume that $G$ is connected,
and is not a tree as otherwise Theorem~\ref{theorem:trees} would apply.

\medskip

The subgraph $H$ we consider is obtained by removing all $1$-vertices from~$G$.
Of course, we have $\mad(H) \leq 3$ and it may happen that $G=H$.
We may as well assume that $H$ remains nice,
as, if it is not the case,
then $G$ would be a tree (a bistar, i.e. a tree having exactly two $2^+$-nodes, being adjacent),
which is not possible as pointed out above.

In the following result, we observe that, by showing that $H$ verifies $\Delta(H) \leq 3$, then we will get our conclusion.

\begin{proposition} \label{proposition:DeltaH>4}
If $\Delta(H) \leq 3$, then $G$ is not a counterexample.
\end{proposition}

\begin{proof}
If $G=H$, then $G$ admits an edge-injective neighbour-sum-distinguishing $W$-edge-weighting
according to Theorem~\ref{theorem:m+2D} since we would have $\Delta(G) \leq 3$.
So assume that $G$ has $1$-vertices.
Since we assume that $\Delta(H) \leq 3$, 
there exists an edge-injective neighbour-sum-distinguishing $\{\alpha_1,...,\alpha_{m_H+6}\}$-edge-weighting $w_H$ of $H$,
still according to Theorem~\ref{theorem:m+2D}.

We now extend $w_H$ to the pendant edges of $G$.
We successively consider every vertex $v$ of $H$ incident to a pendant edge.
We start by assigning an arbitrary non-used weight to every pendant edge incident to $v$,
but one, say $vu$.

We claim that we can find a correct weight for $vu$.
First, we note, according to Observation~\ref{observation:situations},
that only the neighbours of $v$ in $H$
can eventually cause sum conflicts.
Hence, when extending $w_H$ to $vu$,
we just have to make sure, since $vu$ is the last non-weighted pendant edge incident to $v$,
that $\sigma(v)$ does not meet any of the determined sums of the vertices adjacent to $v$ in $H$.
By our assumption on $\Delta(H)$, there are at most three such vertices,
while we have at least seven ways to weight~$vu$ (among $\{\alpha_1, ... ,\alpha_{m_G+6}\}$),
each determining a distinct value for $\sigma(v)$.
We can hence find a correct non-used weight for $vu$.

Since the process above can be applied for all vertices of $H$ incident to a pendant edge in $G$,
weighting $w_H$ can hence be extended to all pendant edges of $G$.
Thus $w_H$ can be extended to an edge-injective neighbour-sum-distinguishing $W$-edge-weighting of $G$, as claimed.
\end{proof}

\medskip

It remains to show that $\Delta(H) \leq 3$.
This is proved by getting successive information concerning the structure of $H$
so that classical discharging arguments can eventually be employed.

\begin{claim} \label{claim:pendant-deg7}
If $v \in V(H)$ is adjacent to $1$-vertices in $G$, then $d_H(v) \geq 7$.
\end{claim}

\begin{proof}
This follows from Observation~\ref{observation:remove-pendant},
as, when removing a pendant edge from $G$, 
applying induction,
and putting the edge back,
we then have seven distinct weights to achieve the extension to $G$.
\end{proof}

\begin{claim} \label{claim:claim2}
We have $\delta(H) \geq 2$.
\end{claim}

\begin{proof}
If $\delta(H)=0$, then $G$ is a star, contradicting one of our initial assumptions.
Now, if $\delta(H)=1$, then $G$ includes a vertex $v$ such that $d_H(v)=1$ and $v$ is incident to pendant edges in $G$.
But this is impossible as such a $v$ would not meet the condition in Claim~\ref{claim:pendant-deg7}.
So $\delta(H) \geq 2$.
\end{proof}

\begin{claim} \label{claim:no2v2v}
Graph $H$ has no two adjacent $2$-vertices.
\end{claim}

\begin{proof}
Suppose that $H$ has an edge $uv$ such that $d_H(u)=d_H(v)=2$.
Recall that, according to Claim~\ref{claim:pendant-deg7}, we have $d_G(u)=d_G(v)=2$.
In this case, we consider the graph $G':=G-\{uv\}$ with size $m_{G'}:=|E(G')|$.
Clearly $G'$ remains nice (otherwise Claim~\ref{claim:pendant-deg7} would be violated),
has $\mad(G') \leq 3$,
and, due to the minimality of $G$,
graph $G'$ admits an edge-injective neighbour-sum-distinguishing $\{\alpha_1,...,\alpha_{m_{G'}+6}\}$-edge-weighing $w_{G'}$.

In $G'$, we have $d_{G'}(u)=d_{G'}(v)=1$.
Let $u'$ and $v'$ be the neighbours of $u$ and $v$, respectively, in $G'$.
Since $w_{G'}$ is edge-injective, we have $w_{G'}(uu') \neq w_{G'}(vv')$.
We now note that, under all those assumptions,
weighting $w_{G'}$ can easily be extended to an edge-injective neighbour-sum-distinguishing $W$-edge-weighing $w_G$ of $G$,
i.e. to the edge $uv$, a contradiction.
We note that, because $w_{G'}(uu') \neq w_{G'}(vv')$ and $d_G(u)=d_G(v)=2$,
we cannot get $\sigma_{w_G}(u)=\sigma_{w_G}(v)$ when assigning any weight to $uv$, recall Observation~\ref{observation:situations}.
So the only constraints we have
are that $\sigma_{w_G}(u)$ has to be different from $\sigma_{w_G}(u')$ (which is exactly $\sigma_{w_{G'}}(u')$) and $\sigma_{w_G}(v)$ must be different from $\sigma_{w_G}(v')$ (which is exactly $\sigma_{w_{G'}}(v')$).
These constraints forbid us from assigning, to $uv$, at most two of the seven weights that have not been used yet.
So we can extend $w_{G'}$ to $w_G$.
\end{proof}

\begin{claim} \label{claim:no2vonly3v}
Graph $H$ has no $2$-vertex adjacent to two $3$-vertices.
\end{claim}

\begin{proof}
Assume $H$ has such a vertex $v$ with $d_H(v)=2$,
and $v$ has two neighbours $u_1$ and $u_2$ verifying $d_H(u_1)=d_H(u_2)=3$.
According to Claim~\ref{claim:pendant-deg7},
we have $d_G(v)=2$, $d_G(u_1)=3$ and $d_G(u_2)=3$.
Let $G':=G-\{vu_1,vu_2\}$ and $m_{G'}:=|E(G')|$.
Clearly, $G'$ remains nice with $\mad(G') \leq 3$,
and, by the minimality of $G$,
there exists an edge-injective neighbour-sum-distinguishing $\{\alpha_1,...,\alpha_{m_{G'}+6}\}$-edge-weighing $w_{G'}$.
According to Observation~\ref{observation:cut2edges}, weighting $w_{G'}$ can be extended to an edge-injective neighbour-sum-distinguishing $W$-edge-weighing of $G$
provided we have at least five distinct edge weights in hand.
Since we here have eight non-used edge weights dedicated to weighting $vu_1$ and $vu_2$,
the extension of $w_{G'}$ to $G$ hence exists.
\end{proof}

\begin{claim} \label{claim:no3vseestwo2v}
Graph $H$ has no $3$-vertex adjacent to two $3^-$-vertices.
\end{claim}

\begin{proof}
The proof is similar to that of the previous claim.
Assume $H$ has such a $3$-vertex $v$
being adjacent to at least two $3^-$-vertices $u_1$ and $u_2$.
Again, we set $G':=G-\{vu_1,vu_2\}$, and let $w_{G'}$ be an edge-injective neighbour-sum-distinguishing $\{\alpha_1,...,\alpha_{m_{G'}+6}\}$-edge-weighing of $G'$,
where $m_{G'}:=|E(G')|$.
Still according to Observation~\ref{observation:cut2edges},
we know that an extension exists provided we have at least six weights available.
So $w_{G'}$ can correctly be extended to $vu_1$ and $vu_2$,
as eight edge weights can be used in the present context.
\end{proof}

\medskip

Before getting our conclusion, 
we prove two last claims which are a bit more general than what we actually need.	

\begin{claim} \label{claim:6v}
Graph $H$ has no $6$-vertex adjacent to two $2$-vertices.
\end{claim}

\begin{proof}
Assume $H$ has such a $6$-vertex $v$,
and let $u_1$ and $u_2$ denote any two of its neighbouring $2$-vertices.
Recall that $d_H(v)=d_G(v)$, $d_H(u_1)=d_G(u_1)$ and $d_H(u_2)=d_G(u_2)$ according to Claim~\ref{claim:pendant-deg7}.
Let $G':=G-\{vu_1,vu_2\}$ and set $n_{G'}:=|V(G')|$ and $m_{G'}:=|E(G')|$.
Clearly $G'$ is nice (Claims~\ref{claim:pendant-deg7} and~\ref{claim:no2v2v}) with $\mad(G') \leq 3$, and, since $n_{G'}+m_{G'}<n_G+m_G$,
there exists an edge-injective neighbour-sum-distinguishing $\{\alpha_1,...,\alpha_{m_{G'}+6}\}$-edge-weighing $w_{G'}$ of $G'$.
Again according to Observation~\ref{observation:cut2edges},
under these conditions, we know that $w_{G'}$ can be extended to $vu_1$ and $vu_2$ provided we have at least eight weights available.
Since this is precisely the case, we are done.
\end{proof}

\begin{claim} \label{claim:4v5v}
Graph $H$ has no $4$- or $5$-vertex adjacent to at least two $3^-$-vertices.
\end{claim}

\begin{proof}
The proof is very similar to that of Claim~\ref{claim:6v},
and can be mimicked by letting $u_1$ and $u_2$ be two $3^-$-vertices adjacent to $v$.
We then get the same conclusion from Observation~\ref{observation:cut2edges}.
\end{proof}

\medskip

We are now ready to prove that $H$ has maximum degree~$3$.

\begin{claim}
We have $\Delta(H) \leq 3$.
\end{claim}

\begin{proof}
Assume the contrary, namely that $\Delta(H) \geq 4$.
We prove the claim by means of the so-called discharging method,
through a discharging procedure, based on the following rules.

\medskip

To every vertex $v$ of $H$, we assign an initial charge $\omega(v)$ being $d_H(v)-3$.
Since $\mad(H) \leq 3$, we have $$\sum_{v \in V(H)} d_H(v) \leq 3 \cdot n_H,$$
which implies that $$\sum_{v \in V(H)} \omega(v) \leq 0.$$
Without creating or deleting any amount of charge assigned to the vertices,
we now transfer a part of the assigned charges from neighbours to neighbours,
through three discharging rules applied in two successive steps.

In the sequel, by a \textit{weak $3$-vertex} of $H$
we refer to a $3$-vertex neighbouring a $2$-vertex (recall that a $3$-vertex of $H$ is adjacent to at most one $2$-vertex according to Claim~\ref{claim:no3vseestwo2v}).
The first discharging step consists in applying the following rule:

\begin{enumerate}
	\item[(R1)] Every $4^+$-vertex transfers $\frac{1}{4}$ to every adjacent weak $3$-vertex.
\end{enumerate}

\noindent Once the first discharging step has been performed, we then apply the second step,
which consists in applying the following two discharging rules:

\begin{enumerate}
	\item[(R2)] Every weak $3$-vertex transfers $\frac{1}{2}$ to every adjacent $2$-vertex.
	
	\item[(R3)] Every $4^+$-vertex transfers $\frac{1}{2}$ to every adjacent $2$-vertex.
\end{enumerate}

We now compute the final charge $\omega^*(v)$ that every vertex $v$ of $H$ gets
once the two steps above have been performed.
Recall that $\delta(H) \geq 2$ according to Claim~\ref{claim:claim2}.

\begin{enumerate}
	\item If $v$ is a $2$-vertex, then $v$ is adjacent to a $4^+$-vertex,
	and either a weak $3$-vertex or a $4^+$-vertex according to Claims~\ref{claim:no2v2v} and~\ref{claim:no2vonly3v}.
	Through Rules (R2) and (R3), the two neighbours of $v$ both transfer $\frac{1}{2}$ to $v$.
	Hence, $\omega^*(v)=\omega(v)+2\times\frac{1}{2}=0$.
	
	\item If $v$ is a $3$-vertex, then $v$ is either weak, or not.
	If $v$ is not weak, it is not concerned by any of Rules (R1), (R2) and (R3),
	so $\omega^*(v)=\omega(v)=0$.
	Now assume $v$ is a weak $3$-vertex.
	According to Claim~\ref{claim:no3vseestwo2v}, vertex $v$ is adjacent to a $2$-vertex~$u$,
	and two $4^+$-vertices $z_1$ and $z_2$.
	Through Rule (R1), vertex $v$ receives $\frac{1}{4}$ from each of $z_1$ and $z_2$, while, through Rule (R2),
	vertex $v$ then transfers $\frac{1}{2}$ to $u$.
	Therefore, $\omega^*(v)=\omega(v)+2\times\frac{1}{4}-\frac{1}{2}=0$.
	
	\item If $v$ is a $4$- or $5$-vertex, then $v$ is adjacent to at most one vertex being either a $2$-vertex or weak $3$-vertex $u$
	according to Claim~\ref{claim:4v5v}.
	The case where $\omega^*(v)$ is minimum
	is when $v$ is a $4$-vertex and $u$ is a $2$-vertex,
	in which case $v$ transfers $\frac{1}{2}$ to $u$.
	In that case, through Rule~(R3), we get $\omega^*(v)=\omega(v)-\frac{1}{2}=\frac{1}{2}$.
	So, whenever $v$ is a $4$- or $5$-vertex, we get $\omega^*(v) > 0$.
	
	\item If $v$ is a $6$-vertex, then $v$ is adjacent to at most one $2$-vertex according to Claim~\ref{claim:6v}.
	The case where $\omega^*(v)$ gets minimum
	is essentially when $v$ neighbours one $2$-vertex and five weak $3$-vertices.
	In that case, following Rules~(R1) and~(R3), we get $\omega^*(v)=\omega(v)-5\times\frac{1}{4}-\frac{1}{2}=\frac{5}{4}$.
	Hence, we always get $\omega^*(v)>0$ in that case.
	
	\item If $v$ is a $7^+$-vertex, then $v$ transfers most charge when $v$ is adjacent to $d_H(v)$ $2$-vertices.
	In that case, following Rule~(R3) we deduce that $\omega^*(v)=\omega(v)-d_H(v)\times\frac{1}{2}$.
	Under the assumption that $d_H(v)\geq 7$, observe that $\omega(v)>d_H(v)\times\frac{1}{2}$.
	So, again, we always have $\omega^*(v)>0$ in this case.
\end{enumerate}

From the analysis above, we get, because $\Delta(H) \geq 4$,
that $$\sum_{v \in V(H)} \omega(v) \leq 0 < \sum_{v \in V(H)} \omega^*(v),$$
which is impossible as we did not create any new amount of charge when applying the discharging procedure.
Hence, we have $\Delta(H) \leq 3$.
\end{proof}

\medskip

The result now follows from Proposition~\ref{proposition:DeltaH>4}.
\end{proof}

\medskip

Theorem~\ref{theorem:mad} applies to all nice graphs with maximum average degree at most~$3$.
Among the classes of such graphs, we would like to highlight the class of nice planar graphs with girth at least~$6$,
where the \textit{girth} $g(G)$ of a graph $G$ is the length of its smallest cycles.
We refer the reader to e.g.~\cite{BKNRS99},
wherein the authors noticed that, for every planar graph $G$,
we have $$\mad(G) < \frac{2g(G)}{g(G)-2}.$$
This gives that every planar graph $G$ with $g(G) \geq 6$ has $\mad(G) \leq 3$.

\begin{corollary} \label{corollary:planar}
Let $G$ be a nice planar graph $G$ with $g(G) \geq 6$.
Then, for every set $W$ of $|E(G)|+6$ distinct weights,
there exists an edge-injective neighbour-sum-distinguishing $W$-edge-weighting of $G$.
In particular, we have $\chi_{\Sigma}^{e,1}(G) \leq |E(G)|+6$.
\end{corollary}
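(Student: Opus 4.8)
The plan is to derive this directly from Theorem~\ref{theorem:mad}, the only point that needs checking being that a nice planar graph $G$ with $g(G)\geq 6$ satisfies $\mad(G)\leq 3$. First I would recall the standard Euler-formula density estimate (see~\cite{BKNRS99}): for every non-empty planar graph $H$, one has $|E(H)| \leq \frac{g(H)}{g(H)-2}\,(|V(H)|-2) < \frac{g(H)}{g(H)-2}\,|V(H)|$, and hence $\frac{2|E(H)|}{|V(H)|} < \frac{2g(H)}{g(H)-2}$.

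Next I would propagate this to $\mad(G)$. Every non-empty subgraph $H$ of $G$ is itself planar, and deleting vertices or edges cannot destroy a shortest cycle, so $g(H) \geq g(G) \geq 6$. Since $x \mapsto \frac{2x}{x-2} = 2 + \frac{4}{x-2}$ is decreasing on $(2,\infty)$, we get $\frac{2|E(H)|}{|V(H)|} < \frac{2g(H)}{g(H)-2} \leq \frac{2g(G)}{g(G)-2} \leq \frac{12}{4} = 3$ for every such $H$. Taking the maximum over all non-empty subgraphs of $G$, this yields $\mad(G) < 3$, and in particular $\mad(G) \leq 3$.

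It then remains only to invoke Theorem~\ref{theorem:mad}: as $G$ is a nice graph with $\mad(G)\leq 3$, for every set $W$ of $|E(G)|+6$ distinct strictly positive weights there exists an edge-injective neighbour-sum-distinguishing $W$-edge-weighting of $G$, whence $\chi^{e,1}_\Sigma(G) \leq |E(G)|+6$.

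I do not expect a real obstacle here: the corollary is a formal consequence of Theorem~\ref{theorem:mad} together with a classical girth/density bound for planar graphs. The only subtlety worth stating carefully is that $\mad$ is defined through subgraphs, so one must explicitly note that passing to a subgraph preserves planarity and can only increase the girth, which is exactly what lets the per-graph density bound upgrade to a bound on $\mad(G)$.
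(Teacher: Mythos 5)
Your proposal is correct and follows essentially the same route as the paper, which simply invokes the bound $\mad(G) < \frac{2g(G)}{g(G)-2}$ from~\cite{BKNRS99} for planar graphs and then applies Theorem~\ref{theorem:mad}; you merely make the subgraph/girth propagation explicit, which is fine (one small wording slip: deleting edges or vertices can of course destroy cycles, but that only increases girth, so $g(H)\geq g(G)$ still holds because every cycle of $H$ is a cycle of $G$, and acyclic subgraphs trivially have average degree below $2$).
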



\section{Discussion} \label{section:ccl}

In this work, we have introduced and studied Conjecture~\ref{conjecture:main}
which stands, in some sense, as a combination of the 1-2-3 Conjecture and the Antimagic Labelling Conjecture.
In particular, as a support to Conjecture~\ref{conjecture:main},
we have pointed out that some families of nice graphs agree with it,
or sometimes almost agree with it, i.e. up to an additive constant term.
Although these results can be regarded as a first step towards Conjecture~\ref{conjecture:main},
it is worth emphasizing that our work does not bring anything new towards attacking the 1-2-3 Conjecture and the Antimagic Labelling Conjecture
but rather concerns some side aspects of these two conjectures.

\medskip

As further work towards Conjecture~\ref{conjecture:main},
it would be interesting exhibiting, for all nice graphs $G$, bounds on $\chi_\Sigma^{e,1}(G)$ of the form $|E(G)|+k$ for a fixed constant~$k$.
One could as well try to get a better bound of the form $k \cdot |E(G)|$
for some $k$ in between~$1$ and~$2$.
Obtaining one such of these two bounds
would already improve the ones we have exhibited in Section~\ref{section:upper-bound}.
It is worth mentioning that our bounds in that section
can slightly be improved by making some choices in a more clever way.
But these improvements would allow us to save a small constant number of weights only,
which is far from the desired improvement we have mentioned earlier.

As another direction, we would also be interested in knowing other classes of nice graphs agreeing with Conjecture~\ref{conjecture:main}
and being not known to be antimagic yet.
Among such classes, let us mention the case of nice bipartite graphs $G$,
for which we did not manage to come up with an $|E(G)|+k$ bound on $\chi_\Sigma^{e,1}(G)$,
for any constant~$k$.
Another such class that would be interesting investigating
is the one of nice subcubic graphs.
We already know that cubic graphs agree with Conjecture~\ref{conjecture:main},
recall Theorem~\ref{theorem:antimagic-classes}.
Furthermore, we also know that nice subcubic graphs $G$, in general, verify $\chi_\Sigma^{e,1}(G) \leq |E(G)|+6$,
recall Theorems~\ref{theorem:m+2D} and~\ref{theorem:mad}.
It nevertheless does not seem obvious how these results can be used
in order to show that nice subcubic graphs agree with Conjecture~\ref{conjecture:main}.
Such a result, though, would be one natural step following Observation~\ref{observation:small-degree}.
Nice planar graphs would also be interesting candidates to investigate,
as we have been mostly successful with sparse classes of nice graphs.
Our result in Corollary~\ref{corollary:planar} may be regarded as a first step towards that direction.

Our results in this paper may also be subject to further investigations.
In particular, there is still a gap for nice $2$-degenerate graphs and graphs with maximum average degree at most~$3$ 
between our bounds in Section~\ref{section:bound-classes} and the bound in Conjecture~\ref{conjecture:main}.
One could as well wonder how to generalize our results to nice $k$-degenerate graphs and graphs with maximum average degree at most~$k$
for larger fixed values of $k$.
In particular, it could be interesting to exhibit, for these graphs $G$,
a general upper bound on $\chi_\Sigma^{e,1}(G)$ of the form $|E(G)|+\mathcal{O}(k)$
involving a small function of $k$.


\section*{Acknowledgements}

This work was initiated when the second author visited the Technical University of Denmark.
Support from the ERC Advanced Grant GRACOL, project no. 320812, is gratefully acknowledged.

\end{document}